\newtheoremstyle{obs}% name
{3pt}%      Space above
{3pt}%      Space below
{}%         Body font
{}%         Indent amount (empty = no indent, \parindent = para indent)
{\bfseries}% Thm head font
{.}%        Punctuation after thm head
{.5em}%     Space after thm head: " " = normal interword space;
\theoremstyle{obs}
\newtheorem{remark}[theorem]{Remark}
\newtheorem{example}[theorem]{Example}
\title{\bf A new perspective on symplectic integration of constrained mechanical systems via discretization maps }
\author[1]{Mar\'ia\ Barbero Li\~n\'an\thanks{m.barbero@upm.es}}
\author[2]{David\ Mart\'{\i}n de Diego\thanks{david.martin@icmat.es}}
\author[3]{Rodrigo T. Sato Mart{\'\i}n de Almagro\thanks{rodrigo.t.sato@fau.de}}
\affil[1]{\small Departamento de Matem\'atica Aplicada, Universidad Polit\'ecnica de Madrid, Av. Juan de Herrera 4, 28040 Madrid, Spain. }
\affil[2]{\small Instituto de Ciencias Matem\'aticas (CSIC-UAM-UC3M-UCM), C/Nicol\'as Cabrera 13-15, 28049 Madrid, Spain.}
\affil[3]{\small Friedrich-Alexander-Universit\"at Erlangen-N\"urnberg, Institute of Applied Dynamics, Immerwahrstrasse 1, 91058 Erlangen, Germany}
\date{\today}
\begin{document}

\maketitle
\begin{abstract}
  A new geometric procedure to construct symplectic methods for constrained mechanical systems is developed in this paper. The definition of a map coming from the notion of retraction maps allows to adapt the continuous problem to the discretization rule rather than viceversa. As a result, the constraint submanifold is exactly preserved by the symplectic discrete flow and the extension of these methods to the case of non-linear configuration spaces is doable.

    \vspace{2mm}

    \textbf{Keywords:} constrained mechanical system, holonomic constraint, symplectic method, retraction map, discretization map.

    \vspace{2mm}
    
\textbf{Mathematics Subject Classification:} 37M15, 65P99, 70G45, 70G65,
53D12, 53D22.

\end{abstract}

\section{Introduction}\label{Sec:Intro}

Nowadays, an interesting research line in the area of numerical simulation of mechanical systems consists of preserving simultaneously the qualitative and quantitative behaviour of the system so that associated underlying geometrical structures remain invariant. Such methods are called geometric integrators, see for instance \cite{hairer,sanz-serna,blanes}. They could preserve the energy behavior, some constants of motion, different geometrical structures (symplectic, Poisson, contact, Dirac...). Moreover, one of the essential goals of geometric integration is to produce methods exactly preserving the configuration space~\citep{LeRe}. 

In many mechanical simulations, including recent applications to accelerated optimization \citep{Jo18}, the configuration manifold is defined by the vanishing of one or several constraints. Such constraints are called holonomic constraints. Typically, they are discretized using a  procedure similar to the one used for discretizing the original mechanical system (Hamiltonian, Lagrangian, etc) without constraints. However, the resultant methods usually do not  exactly preserve the constraint submanifold, what leads to numerical instabilities as mentioned in~\cite{Danger}. That is why the propagation of errors in numerical algorithms for constrained systems is more complicated to analyze than for unconstrained systems \citep{LeimSkeel96,LeRe}.

In the recent paper by~\cite{21MBLDMdD} the notion of retraction map~\citep{AbMaSeBookRetraction} is used to define a discretization map $R^h_d: TQ\rightarrow Q\times Q$
that induces by lifting a  symplectic integrators for Lagrangian and Hamiltonian systems.
 For instance, these discretization maps were used to derive discrete Lagrangians from a continuous Lagrangian
 $L: TQ\rightarrow {\mathbb R}$ as follows: 
\[
L_d(q_0, q_1)=h L\left((R^h_d)^{-1}(q_0, q_1)\right),
\] according to the seminal paper on discrete variational calculus by~\cite{MW_Acta}.  
In an Euclidean space, the above-mentioned discretization maps are usually given by
\begin{equation}\label{eq:EuclIntro}
R^h_d(q, v)=(q-h\, \alpha\, v, q+h (1-\alpha) v),\quad 0\leq \alpha\leq 1\, ,
\end{equation}
that define the following discrete Lagrangian function:
\[
L_d (q_0, q_1)=h\, L\left( (1-\alpha)q_0+\alpha q_1, \frac{q_1-q_0}{h}\right)\,.
\]
If the continuous system is additionally subjected to holonomic  constraints $\phi(q)=0$, a natural option is to evaluate the constraints not at the discrete points $\{q_k\}$ of the discrete trajectory, but at some intermediate points as follows: 
\begin{equation}\label{eq:phi}
    \phi\left(\tau_Q((R^h_d)^{-1}(q_0, q_1))\right)=0\, ,
\end{equation}
where $\tau_Q: TQ\rightarrow Q$ is the canonical tangent bundle projection. For the example in~\eqref{eq:EuclIntro} the Equation~\eqref{eq:phi} becomes
\begin{equation*} 
\phi ((1-\alpha)q_0+\alpha q_1)=0\, .
\end{equation*}

Consequently, as mentioned in~\cite{Danger}, the holonomic constraints are not preserved at points of the discrete trajectory (step values), but at some intermediate values (stage values). That is why numerical instabilities may appear when solving the equations. Recently, there has been an increasing interest in developing numerical integrators for holonomic dynamics focusing on reducing the error in the holonomic constraints, as well as in the tangency condition so that the discrete trajectories stay close to the constraint submanifold~\citep{2022CelledoniEtAl,Leok2021Constrained,2022Joris,2022LeokSOn}.
In this paper, an alternative procedure to prevent that problem is described so that the holonomic constraints are satisfied at all discrete points.

The proposed approach is important because it allows for the systematic and constructive study   of relevant families of holonomic systems used, for instance, to model robots made by discrete links or multibody systems as described in Section~\ref{Sec:multibody}.

The contents are organized as follows. Section~\ref{Sec:constrained} includes the definition of constrained mechanical systems. When the mechanical system is constrained to evolve on a submanifold of the configuration space, the equations can be written on the submanifold or on the entire configuration space to end up projecting the equations to the constraint submanifold using holonomic constraints. The former approach is referred in this paper as ``intrinsic"  and the latter as ``extrinsic". However, we prove that both dynamics are equivalent in Proposition~\ref{prop:extr=intrin}. In Section~\ref{Sec:SymplHolon} a symplectic integrator for holonomic mechanical systems is obtained using the theory on discretization maps developed in~\citep{21MBLDMdD} and reviewed in Section~\ref{Sec:Rd}. In order to preserve exactly the constraint submanifold {\sl the continuous problem is slightly perturbed} (see Definition \ref{definition-perturbed}) so that it fits perfectly into the discretization map. Examples of symplectic integrators for mechanical Lagrangian systems on Euclidean spaces and on Lie groups are given in Section~\ref{Sec:ExampleEuclid}. The theory is developed in the Lagrangian framework on a general manifold and in the Hamiltonian framework on Lie groups for constrained systems in Section~\ref{section:Lie} using the trivialization of the needed manifolds.
The Appendix recalls the dynamics of Lagrangian unconstrained mechanical systems on Lie groups using the notion of trivialized tangent map used in~\citep{BouMa} so that the paper is self-contained and Section~\ref{section:Lie} can be easier to read. Finally, conclusions and future work are included in Section~\ref{S:future}.

\section{Constrained mechanical systems} \label{Sec:constrained}

Let us introduce the elements necessary to describe a constrained mechanical system. 
Consider a submanifold $N$ of a finite dimensional manifold $Q$ where $i_N\colon N\hookrightarrow Q$ is the canonical inclusion. Typically, $Q={\mathbb R}^m$ is considered, even though it is not the most general situation as described in Sections~\ref{Sec:multibody} and ~\ref{section:Lie}. 
The dynamics on $N$ is specified by a Lagrangian function $L: TN\rightarrow {\mathbb R}$ that is assumed to be regular.

At this point we could work on the submanifold $N$ from an intrinsic viewpoint or we could embed the problem in $Q$ to obtain the equations of motion as a constrained system. That family of constrained systems are called holonomic systems, as opposed to nonholonomic systems where constraints also involve velocities.

\subsection{Intrinsic viewpoint}\label{sec:intrinsic}

When working on the manifold $N$, the equations of motion are completely determined using Hamilton's principle \citep{AbMa}. This principle states that the solution curves $x: I\subseteq {\mathbb R}\rightarrow N$ of the Lagrangian system determined by $L\colon TN \rightarrow \mathbb{R}$ are the critical points of the functional: 
\[
J=\int^T_0 L(x(t), \dot{x}(t))\; {\rm d}t\, .
\]
In local coordinates $(x^a)$, $1\leq a\leq n=\dim N$, for $N$ and induced coordinates $(x^a, \dot{x}^a)$ for $TN$, the solution curves verify the Euler-Lagrange equations: 
\begin{equation} \label{eq:EL eq}
\frac{d}{dt}\left( \frac{\partial L}{\partial \dot{x}^a}\right)-\frac{\partial L}{\partial {x}^a}=0\, .
\end{equation}
Under  regularity assumption on the Lagrangian, the solution curves can also be described by the following Hamilton's equations: 
\begin{equation} \label{eq:H eq}
\dot{x}^a=\frac{\partial H}{\partial p_a}\; ,\qquad  \dot{p}_a=-\frac{\partial H}{\partial x^a}\, ,
\end{equation}
using the coordinates $(x^a, p_a)$ for the cotangent bundle $T^*N$ of $N$ and where $H: T^*N\rightarrow \mathbb{R}$ is the Hamiltonian function given by the Legendre transform of the energy function determined by $L$ (see~\cite{AbMa} for details).

Standard symplectic integrators \citep{sanz-serna,hairer,blanes} can be used to numerically integrate Euler-Lagrange equations~\eqref{eq:EL eq} or Hamilton's equations~\eqref{eq:H eq}. However, the main difficulty is to deal with a nonlinear configuration space to define symplectic integrators (see for instance \citep{BogMa} and references therein  for the case of Lie groups). The main objective of the paper is to elucidate the geometry of symplectic integration of holonomic systems and to describe methods valid for general  manifolds, not only for Euclidean spaces.

\subsection{Extrinsic viewpoint}\label{sec:extrinsic}

To overcome the problem of working with nonlinear spaces, or spaces which do not admit simple discretizations, it is common to introduce holonomic constraints. Here, the manifold $Q$ is not assumed to be a linear space. The motion is restricted to remain in a $n$-dimensional submanifold $N$ of $Q$ determined by some algebraic equations on the position variables of the
system such as
$$\phi^{\alpha}(q)=0, \quad 1\leq \alpha\leq m-n\, ,$$ 
where $\phi^{\alpha}$ are smooth functions and $\dim Q=m>n$ (see \citep{arnold, LeRe}).

The equations on $Q$, with local coordinates $(q^i)$, $1\leq i\leq n$, of a mechanical system determined by a Lagrangian function $L: TN\rightarrow {\mathbb R}$ and holonomic constraints are given by
\begin{eqnarray*}
\frac{d}{dt}\left(
\frac{\partial \tilde{L}}{\partial \dot{q}^i}
\right)-\frac{\partial \tilde{L}}{\partial q^i}
&=&-\Lambda_{\alpha}\frac{\partial\phi^{\alpha}}{\partial q^i},\; \quad \mbox{for }i=1,\dots,m,\\
\phi^{\alpha}(q)&=&0\; ,
\end{eqnarray*}
where $\Lambda_\alpha$ are functions on $TQ$ that play the role of Lagrange multipliers to be determined and $\tilde{L}: TQ\rightarrow {\mathbb R}$ is an arbitrary extension of the Lagrangian $L: TN \rightarrow {\mathbb R}$ such that $\tilde{L}_{|TN}=L$. In the Hamiltonian framework, for a Hamiltonian function $H\colon T^*Q\rightarrow \mathbb{R}$ we obtain the following equations: 
\begin{eqnarray*}
\dot{q}^i&=&\frac{\partial H}{\partial p_i}\; , \\ \dot{p}_i&=&-\frac{\partial H}{\partial q^i}-\Lambda_{\alpha}\frac{\partial \phi^\alpha}{\partial q^i}\; ,\\
\phi^{\alpha}(q)&=&0\; ,
\end{eqnarray*}
where $(q^i, p_i)$ are now the coordinates  of the cotangent bundle $T^*Q$ of $Q$. Note that the momenta in this approach live in a submanifold of $T^*Q$ (in fact $Leg_L(TQ)$, see \cite{MW_Acta}), while in the intrinsic framework the momenta live in $T^*N$.

\subsection{Geometric relation between both approaches}\label{Sec:extr-intr}
Now, the well-known equivalence between the two approaches in Sections~\ref{sec:intrinsic} and~\ref{sec:extrinsic} is geometrically justified. This reasoning is useful to describe with generality the construction of symplectic integrators for holonomic systems in the following sections. 

Let $\tau_Q: TQ\rightarrow Q$, $\tau_N: TN\rightarrow N$, $\pi_Q: T^*Q\rightarrow Q$ and  $\pi_N: T^*N\rightarrow N$ be the corresponding canonical projections of the tangent and cotangent bundles. Remember that any cotangent bundle $T^*M$ is a symplectic manifold $(T^*M,\omega_M)$ with the canonical symplectic structure $\omega_M$, i.e. a closed non-degenerate 2-form, locally given by ${\rm d}p_i\wedge{\rm d}q^i$, in local coordinates $(q^i,p_i)$ for $T^*M$.

For a Lagrangian function $L:TN \rightarrow {\mathbb R}$ we can define two natural Lagrangian submanifolds (see for details  \citep{Tu, 13GuiStern}) associated with the viewpoints in Sections~\ref{sec:intrinsic} and~\ref{sec:extrinsic}, respectively:
\begin{itemize}
\item the Lagrangian submanifold $\hbox{Im} \, {\rm d} L={\rm d} L(TN)$ of the symplectic manifold $(T^*TN, \omega_{TN})$ for the intrinsic point of view. Observe that $\dim(\hbox{Im} \, {\rm d} L)=2\dim N$.
\item the Lagrangian submanifold 

\begin{equation}\label{aqr}
\begin{array}{lcl}
\Sigma_{L}&\!=\!&\left\{\mu_{i_{TN}(u_x)}\in T^*TQ\; |\; (i_{TN})^*\,\mu_{i_{TN}(u_x)}={\rm d}{L}(u_x), \quad u_x\in T_xN\right\}\\
\end{array}
\end{equation}
of the symplectic manifold $(T^*TQ, \omega_{TQ})$ for the extrinsic viewpoint, where $i_{TN}: TN\hookrightarrow TQ$ is the canonical inclusion given by the tangent map of the inclusion $i_N$, that is,  $i_{TN}={\rm T}i_N$ and $\left(i_{TN}\right)^*\colon T^*TQ\rightarrow T^*TN$ is the corresponding pull-back of $i_{TN}$. Observe that $\dim\Sigma_L=2\dim Q$. 
\end{itemize}
 Denote by  $(TT N)^0$  the annihilator of $TT N$ whose fibers at $u_x \in TN$ are given by 
 \begin{equation}\label{eq:annih}(TT N)^0_{{u_x}}=\left(T_{{u_x}}TN\right)^0=\left\{\Lambda\in {T^*_{i_{TN}(u_x)}TQ\, |\, \langle \Lambda, {\rm T}i_{TN}\left(X\right)\rangle} =0, \;\forall \; X\in T_{{u_x}}TN\right\} \end{equation}
From now on,  for the sake of notational simplicity we identify $v_q = i_{TN}(u_x)$ (with $q=i_N(x)$) and start to use $v_q$ in place of $u_x$. The context will make it clear if this is to be understood as a point in $TN$ or in $TQ$. The Lagrangian submanifold $\Sigma_L$ can alternatively be described by Lagrange multipliers and an extension function $\tilde{L}: TQ\rightarrow {\mathbb R}$ of $L: TN\rightarrow {\mathbb R}$ such that $\tilde{L}_{|TN}=L$ as follows:
\begin{align}\label{aqr1}
\Sigma_L&=\,\left\{\mu_{v_q}\in T^*TQ\; |\; \mu_{v_q}-{\rm d}{\tilde{L}}(v_q) \in (T_{v_q} i_{TN}(T N))^0\right\}\\
&=\,\left\{\mu_{v_q}\in T^*TQ\, |\, \mu_{v_q}={\rm d}\tilde{L}(v_q)+\lambda_{\alpha}{\rm d}\phi^{\alpha}(q)+ \tilde{\lambda}_{\alpha}{\rm d}\left({\rm d}_T\phi^{\alpha}(v_q)\right),  v_q\in T_qQ\right\}\, ,\nonumber
\end{align}
where ${\rm d}_T\phi^{\alpha}(v_q)=\frac{\rm d}{{\rm d}t}\Big|_{t=0} (\phi^{\alpha}(\sigma(t)))$ for any curve $\sigma: I\rightarrow Q$ such that $\sigma(0)=q$ and $\dot{\sigma}(0)=v_q$. This represents a tangency condition on vectors to remain on $TN$ and it is sometimes referred to as `constraint at velocity level'.

In local coordinates:
\[
{\rm d}_T\phi^{\alpha}(q,\dot{q})=\frac{\partial \phi^{\alpha}}{\partial q^i}(q)\, \dot{q}^i\, .
\] 
The constraints with the Lagrange multipliers in Equation~\eqref{aqr1} precisely describe the tangent bundle $TN$ as a submanifold of $TQ$:
\[
TN=\left\{ v_q \in TQ\; |\; 
\phi^{\alpha}(q)=0, \quad 
{\rm d}_T\phi^{\alpha}(v_q)=0\right\}\,.
\]
Hence, once a Lagrangian on $TN$ is given, the two Lagrangian submanifolds $\Sigma_L$ and ${\rm d}L(TN)$ describe the same solutions on $TN$ as the following diagram shows: 
\begin{equation}\label{Diagram}
\centerline{\xymatrix{  T^*TN \supseteq {\rm d}L(TN) \ar[d]^{\pi_{TN}}   && \Sigma_L\ar@{^(->}[rr]\ar[ll]_{\quad (i_{TN})^*}\ar[d]^{(\pi_{TQ})_{|TN}}&& T^*TQ\ar[d]^{\pi_{TQ}} \\
TN\ar[rr]^{i_{TN}} &&  i_{TN}(TN)\equiv  TN\ar@{^(->}[rr]^{i_{TN}}&&TQ
}}\end{equation}
\noindent where $\pi_{TN}: T^*TN\rightarrow TN$ and $\pi_{TQ}: T^*TQ\rightarrow TQ$ are the canonical projections for the corresponding cotangent bundles. 

Now we want to show a geometrical interpretation of the equivalence of the dynamics with holonomic constraints  described in Sections~\ref{sec:intrinsic} and~\ref{sec:extrinsic}.

For this purpose it is necessary to use the canonical symplectomorphism $\alpha_Q: TT^*Q\rightarrow T^*TQ$ between double vector bundles \citep{TuHamilton,1999TuUr}.
Locally,
\begin{equation}\label{Eq:TuSymplec}
\begin{array}{crcl} \alpha_Q\colon & TT^*Q & \longrightarrow &  T^*TQ\\&
(q,p,\dot{q},\dot{p}) & \longrightarrow & (q,\dot{q},\dot{p},p).
\end{array}
\end{equation} As described in~\citep{TuHamilton}, the symplectomorphism $\alpha_Q$ is between the sympletic manifold $(TT^*Q, {\rm d}_T\omega_Q)$ and the natural symplectic manifold $(T^*TQ, \omega_{TQ})$. Recall that in local coordinates $(q,p,\dot{q},\dot{p})$ for $TT^*Q$, the symplectic form ${\rm d}_T\omega_Q$ has the following expression: ${\rm d}_T \omega_Q= {\rm d}q\wedge {\rm d}\dot{p}+{\rm d}\dot{q}\wedge {\rm d}p$.

We need to use 
the Lagrangian submanifolds
$(\alpha_Q)^{-1} (\Sigma_L)$ and 
$(\alpha_N)^{-1} ({\rm d}L(TN))$ of the symplectic manifolds
$(TT^*Q,{\rm d}_T\omega_Q)$ and
$(TT^*N,{\rm d}_T\omega_N)$, respectively.

\begin{proposition}\label{propo-equiv} Let
 $N$ be a submanifold of $Q$ and $L\colon TN \rightarrow \mathbb{R}$ be a regular Lagrangian function. Then,
\[
(\alpha_Q)^{-1} (\Sigma_L)=\left\{
X_{\mu_q}\in T_{\mu_q}T^*Q\; |\;  \left( {\rm T}_{\mu_q}i_N^*\right) (X_{\mu_q})\in (\alpha_N)^{-1} ({\rm d}L(TN))\right\}\, .
\]
\end{proposition}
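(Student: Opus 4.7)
The plan is to unwind both sides through a single naturality diagram for the canonical symplectomorphisms $\alpha_Q$ and $\alpha_N$. The key lemma to isolate first is the compatibility identity
\[
(i_{TN})^{*}\circ \alpha_{Q}\;=\;\alpha_{N}\circ T\, i_{N}^{*}
\]
on $T(T^{*}Q|_{N})$, where $i_{N}^{*}\colon T^{*}Q|_{N}\to T^{*}N$ is the usual cotangent restriction. I would verify this in adapted coordinates $(x^{a},y^{\alpha})$ with $N=\{y^{\alpha}=0\}$: a generic element of $T(T^{*}Q|_{N})$ has the form $(x,0,p_{x},p_{y},\dot x,0,\dot p_{x},\dot p_{y})$, and tracing it through both compositions produces $(x,\dot x,\dot p_{x},p_{x})\in T^{*}TN$. (Invariantly, this is the naturality of the canonical one-form under the cotangent lift of $i_{N}$.)

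Next I would handle the implicit domain issue. Because $\Sigma_{L}$ is fibered over $TN\subset TQ$ and $\alpha_{Q}$ covers the identity on the intermediate base $TQ$, the condition $\alpha_{Q}(X_{\mu_{q}})\in \Sigma_{L}$ forces $\pi_{TQ}(\alpha_{Q}(X_{\mu_{q}}))= T\pi_{Q}(X_{\mu_{q}})=v_{q}\in T_{q}N$. In coordinates this is exactly $y=0,\ \dot y=0$, i.e.\ $\mu_{q}\in T^{*}Q|_{N}$ and $X_{\mu_{q}}\in T_{\mu_{q}}(T^{*}Q|_{N})$. Thus $T_{\mu_{q}}i_{N}^{*}(X_{\mu_{q}})$ is well defined, and the naturality identity of Step 1 applies at $X_{\mu_{q}}$.

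With those two ingredients, the equivalence is a short chase. Using the characterisation $\Sigma_{L}=\{\mu_{v_{q}}\in T^{*}TQ\ |\ v_{q}\in TN,\ (i_{TN})^{*}\mu_{v_{q}}={\rm d}L(v_{q})\}$ from~\eqref{aqr},
\[
\alpha_{Q}(X_{\mu_{q}})\in \Sigma_{L}
\Longleftrightarrow (i_{TN})^{*}\alpha_{Q}(X_{\mu_{q}})={\rm d}L(v_{q})
\Longleftrightarrow \alpha_{N}\bigl(T i_{N}^{*}(X_{\mu_{q}})\bigr)={\rm d}L(v_{q}),
\]
and since ${\rm d}L$ is a section of $\pi_{TN}\colon T^{*}TN\to TN$, the final equality is equivalent to $\alpha_{N}(T i_{N}^{*}(X_{\mu_{q}}))\in {\rm d}L(TN)$, i.e.\ $T i_{N}^{*}(X_{\mu_{q}})\in (\alpha_{N})^{-1}({\rm d}L(TN))$, which is the desired characterisation.

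The main substantive step is the naturality lemma in Step 1. It is essentially folklore once one views $\alpha$ as part of the Tulczyjew triple, but it requires some care because $i_{N}^{*}$ is only defined on $T^{*}Q|_{N}$ (not on all of $T^{*}Q$) and $i_{TN}$ is only an immersion, so the equality lives between maps defined on suitable restrictions. Once the diagram in Step 1 is set up correctly, Steps 2 and 3 are routine.
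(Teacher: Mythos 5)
Your proposal is correct and follows essentially the same route as the paper: the heart of both arguments is the commutativity of the square relating $\alpha_Q$, $\alpha_N$, $(i_{TN})^*$ and ${\rm T}i_N^*$ over $\pi_{TQ}^{-1}(TN)$, combined with the facts that $(i_{TN})^*(\Sigma_L)={\rm d}L(TN)$ and that $\Sigma_L$ fibers over $TN$ (your Step 2). You merely spell out in adapted coordinates, and with explicit attention to domains, what the paper asserts as a direct consequence of that diagram.
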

\begin{proof}
The proposition is a direct consequence of the commutativity  of the following diagram: 
\

\centerline{\xymatrix{ T^*TQ\supseteq T^*_{TN}TQ =\pi_{TQ}^{-1}(TN) \ar[d]^{i^*_{TN}} \ar[rr]^{\alpha^{-1}_Q} &&T(T^*_NQ)=T(\pi_Q^{-1}(N))\subset TT^*Q
\ar[d]^{Ti^*_N} \\
 T^*TN   \ar[rr]^{\alpha^{-1}_N} && TT^*N
}}
\noindent where $\pi_{TQ}: T^*TQ\rightarrow TQ$ is the canonical projection of the cotangent bundle. 
Diagram~\eqref{Diagram} shows that
\[
i_{TN}^*(\Sigma_L)= {\rm d}L(TN)\,
\]
and $\Sigma_L$ fibers onto $TN$. Thus, \[
Ti_N^*\left((\alpha_Q)^{-1} (\Sigma_L)\right)=
(\alpha_N)^{-1} ({\rm d}L(TN))\, .\qedhere
\]
\end{proof}
It follows that the dynamics given by $(\alpha_N)^{-1} ({\rm d}L(TN))$ and $(\alpha_Q)^{-1} (\Sigma_L)$ , both Lagrangian submanifolds, are related. 
However, the intrinsic description given by $dL(TN)$ does not uniquely determine the Lagrange multipliers of the extrinsic description given by $\Sigma_L$, as shown in the proof below. 

\begin{proposition}\label{prop:extr=intrin}
The intrinsic dynamics given in Subsection \ref{sec:intrinsic} and the  dynamics with holonomic constraints given in Subsection \ref{sec:extrinsic} are equivalent. 

\end{proposition}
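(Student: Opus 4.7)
The plan is to promote Proposition~\ref{propo-equiv} from a statement about Lagrangian submanifolds to one about the dynamics they generate. First I would recall that in the Tulczyjew picture the intrinsic equations of motion~\eqref{eq:H eq} on $T^*N$ are exactly the statement that a curve $t\mapsto \bar\mu(t)\in T^*N$ has tangent vector at every point lying in $(\alpha_N)^{-1}({\rm d}L(TN))\subset TT^*N$, and analogously that the extrinsic equations in Subsection~\ref{sec:extrinsic} express that a curve $t\mapsto \mu(t)\in T^*Q$ has tangent vector lying in $(\alpha_Q)^{-1}(\Sigma_L)\subset TT^*Q$. This reformulation is essentially an unpacking of the local coordinate expression of $\alpha_Q$ together with the description~\eqref{aqr1} of $\Sigma_L$ in terms of $\tilde L$ and the Lagrange multipliers $\lambda_\alpha,\tilde\lambda_\alpha$.

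Next I would apply Proposition~\ref{propo-equiv} in each direction. Given a solution $\mu(t)$ of the extrinsic dynamics, the constraint $\phi^{\alpha}(q(t))=0$ together with the fact that $\Sigma_L$ sits over $TN$ guarantee that $\mu(t)\in T^*_NQ$ for all $t$, so the restriction $\bar\mu(t)=i_N^*\mu(t)\in T^*N$ is well defined. Proposition~\ref{propo-equiv} then implies that the tangent vector of $\bar\mu(t)$ lies in $(\alpha_N)^{-1}({\rm d}L(TN))$, hence $\bar\mu(t)$ solves the intrinsic dynamics. For the converse, given an intrinsic solution $\bar\mu(t)$, I would construct a lift $\mu(t)\in T^*_NQ$ of the extrinsic system using the explicit parametrisation~\eqref{aqr1}: the multipliers $\lambda_\alpha$ (playing the role of $\Lambda_\alpha$) are fixed by imposing that $\phi^{\alpha}(q(t))=0$ and its time derivative are preserved by the evolution, which is a linear system solvable precisely because $L$ is regular and the ${\rm d}\phi^{\alpha}$ are independent.

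The main obstacle I anticipate is the subtlety flagged in the paragraph preceding the statement: the lift is not unique, because the term $\tilde\lambda_{\alpha}\,{\rm d}({\rm d}_T\phi^{\alpha}(v_q))$ in~\eqref{aqr1} belongs to the annihilator $(TTN)^0$ in~\eqref{eq:annih} and is killed by $(i_{TN})^*$, acting as a gauge freedom on the extrinsic side. I would show that although $\mu(t)$ is determined only modulo this gauge, every admissible choice projects to the same intrinsic curve $\bar\mu(t)$ and still satisfies the extrinsic equations, so the two dynamics are equivalent in the precise sense of a bijection between their solution curves on $TN$ and on $i_{TN}(TN)\subset TQ$ respectively. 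The Lagrangian equivalence between the formulations of Subsections~\ref{sec:intrinsic} and~\ref{sec:extrinsic} then follows by passing through the (regular) Legendre transforms on each side.
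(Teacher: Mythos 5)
Your proposal is correct and follows essentially the same route as the paper: both arguments rest on Proposition~\ref{propo-equiv} together with the Lagrange-multiplier description~\eqref{aqr1} of $\Sigma_L$, and both resolve the non-uniqueness of the lift by observing that the $\tilde\lambda_\alpha$-term lies in the annihilator and only the combination of multipliers (in the paper, $\Lambda_\alpha=\frac{\rm d}{{\rm d}t}\tilde\lambda_\alpha-\lambda_\alpha$, not $\lambda_\alpha$ alone) is dynamically relevant. The paper simply executes this in local coordinates, writing the points of $\alpha_Q^{-1}(\Sigma_L)$ explicitly and collapsing the resulting equations to the extrinsic holonomic Euler--Lagrange form, whereas you phrase the same content as a two-way correspondence between solution curves.
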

\begin{proof} Proposition~\ref{propo-equiv} describes the intrinsic description in Section~\ref{sec:intrinsic} with the Lagrangian submanifold $\Sigma_L$ locally described by introducing Lagrange multipliers and an extension $\tilde{L}: TQ\rightarrow {\mathbb R}$ of the Lagrangian function $L: TN\rightarrow {\mathbb R}$ as follows:
\[
\Sigma_L=\{\mu_{v_q}={\rm d}\tilde{L}(v_q)+\lambda_{\alpha}{\rm d}\phi^{\alpha}(q)+ \tilde{\lambda}_{\alpha}{\rm d}({\rm d}_T\phi^{\alpha})(v_q), \quad v_q\in T_qN\} \, .
\]
In coordinates, the points of $\Sigma_L$ are given by
\begin{equation*}
\left(q^i, \dot{q}^i;  
\frac{\partial \tilde{L}}{\partial q^i}
+\lambda_{\alpha}\frac{\partial\phi^{\alpha}}{\partial q^i}+ \tilde{\lambda}_{\alpha}\frac{\partial^2\phi^{\alpha}}{\partial q^i\partial q^j}\dot{q}^j, 
\frac{\partial \tilde{L}}{\partial \dot{q}^i}
+\tilde{\lambda}_{\alpha}\frac{\partial \phi^{\alpha}}{\partial q^i}\right)
\end{equation*}
restricted to  $\phi^{\alpha}(q)=0$ and ${\rm d}_T\phi^{\alpha}(q, \dot{q})=0$. Remember that the constraint determined by ${\rm d}_T\phi^{\alpha}$ is a dynamical consequence of $\phi^{\alpha}$ so that the solution curves starting on $N$ remain on $N$. 
Therefore, the points in
$\alpha_Q^{-1}(\Sigma_L)$ have the following local expression 
\begin{equation*}
\left(q^i, p_i=\frac{\partial \tilde{L}}{\partial \dot{q}^i}
+\tilde{\lambda}_{\alpha}\frac{\partial \phi^{\alpha}}{\partial q^i}; \dot{q}^i, 
\dot{p}_i=
\frac{\partial \tilde{L}}{\partial q^i}
+\lambda_{\alpha}\frac{\partial\phi^{\alpha}}{\partial q^i}+ \tilde{\lambda}_{\alpha}\frac{\partial^2\phi^{\alpha}}{\partial q^i\partial q^j}\dot{q}^j\right)\,,
\end{equation*}
and they satisfy $\phi^{\alpha}(q)=0$ and ${\rm d}_T\phi^{\alpha}(q, \dot{q})=0$. Thus, considering $\alpha_Q^{-1}(\Sigma_L)$ as an implicit system of differential equations on $TT^*Q$ we obtain:
\begin{equation*}\label{eq:EL-N}
\frac{d}{dt}\left( 
\frac{\partial \tilde{L}}{\partial \dot{q}^i}
+\tilde{\lambda}_{\alpha}\frac{\partial \phi^{\alpha}}{\partial q^i}\right)
=
\frac{\partial \tilde{L}}{\partial q^i}
+\lambda_{\alpha}\frac{\partial\phi^{\alpha}}{\partial q^i}+ \tilde{\lambda}_{\alpha}\frac{\partial^2\phi^{\alpha}}{\partial q^i\partial q^j}\dot{q}^j\, .
\end{equation*}
After some straightforward computations  we obtain that the dynamics determined by $\alpha_Q^{-1}(\Sigma_L)$ as a submanifold of $TT^*Q$ leads to the extrinsic description of the holonomic dynamics in Section~\ref{sec:extrinsic}:
\begin{eqnarray*}
\frac{\rm d}{{\rm d}t}\left(
\frac{\partial \tilde{L}}{\partial \dot{q}^i}
\right)-\frac{\partial \tilde{L}}{\partial q^i}
&=&\left(\lambda_{\alpha}-\frac{\rm d}{{\rm d}t}(\tilde{\lambda}_{\alpha})\right)\frac{\partial\phi^{\alpha}}{\partial q^i}=
-\Lambda_{\alpha}\frac{\partial\phi^{\alpha}}{\partial q^i}\; ,\label{eq:proof_extrinsic}\\
\phi^{\alpha}(q)&=&0\; .\nonumber
\end{eqnarray*}

It follows that the Lagrange multipliers $\lambda_\alpha$ and $\tilde{\lambda}_\alpha$ are not uniquely determined. Nevertheless, the solutions on $TN$ for both equations are the same and we conclude that the dynamics are equivalent. \qedhere
\end{proof}

We emphasize the fact that solution curves to the Euler-Lagrange equations on $N$ must remain on the submanifold $N$. That is why the additional constraint ${\rm d}_T\phi^{\alpha}$ appears when considering the problem on the whole configuration manifold and two families of Lagrange multipliers are necessary. This is a crucial point when discretizing constrained mechanical systems to obtain numerical integrators that simultaneously preserve the holonomic constraints and the tangency condition.

\section{Discretization maps} \label{Sec:Rd}

We briefly introduce the notion of discretization map developed in the recent work\ \citep{21MBLDMdD}. Such a map is obtained by using retraction maps, first introduced in the literature in~\citep{1931Borsuk} and later used in optimization problems on matrix groups in~\citep{2002Adler}. Let us now focus on the notion of retraction map as defined in~\citep{AbMaSeBookRetraction}. A retraction map on a manifold $M$ is a smooth map 
$R\colon U\subseteq TM \rightarrow M$ where $U$ is an open subset containing the zero section  of the tangent bundle 
such that the restriction map $R_x=R_{|T_xM}\colon T_xM\rightarrow M$ satisfies
\begin{enumerate}
	\item $R_x(0_x)=x$ for all $x\in M$,

	\item 
	$T_{0_x}R_x={\rm Id}_{T_xM}$ with the identification $T_{0_x}T_xM\simeq T_xM$.
\end{enumerate}

\begin{example}
If  $(M, g)$ is a Riemannian manifold, then the exponential map $\hbox{exp}^g: U\subset TM\rightarrow M$ is a typical example of retraction map: 
$
\hbox{exp}^g_x(v_x)=\gamma_{v_x}(1), 
$
where $\gamma_{v_x}$ is the unique  Riemannian geodesic satisfying $\gamma_{v_x}(0)=x$ and $\gamma'_{v_x}(0)=v_x$~\citep{doCarmo}. 
\end{example}

 We are more interested in the discretization of the manifold where the dynamics takes place than in the discretization of the variational principle as developed in~\citep{MW_Acta}. Having that in mind, in \citep {21MBLDMdD}  retraction maps have been used to define discretization maps $R_d\colon U \subset TM  \rightarrow M\times M$, where  $U$ is an open neighbourhood of the zero section of $TM$, 
\begin{eqnarray*}
	R_d\colon U \subset TM & \longrightarrow & M\times M\\
	v_x & \longmapsto & (R^1(v_x),R^2(v_x))\, .
\end{eqnarray*}
Discretization maps satisfy the following properties:
\begin{enumerate}
	\item $R_d(0_x)=(x,x)$, for all $x\in M$.
	\item $T_{0_x}R^2_x-T_{0_x}R^1_x={\rm Id}_{T_xM} \colon T_{0_x}T_xM\simeq T_xM \rightarrow T_xM$ is equal to the identity map on $T_xM$ for any $x$ in $M$.
\end{enumerate}
Thus, the discretization map $R_d$ is a local diffeomorphism.

We introduce here the inversion map so that higher-order methods can be obtained by composition of Lagrangian submanifolds as described in Section~\ref{Sec:Composition}. The inversion map $I_M: M\times M\rightarrow M\times M$ is given by $I_M (x,y)=(y, x)$ for all $x, y\in M$.
Now, if  $R_d:U\subset TM\rightarrow M\times M$ is a discretization map, then 
$R^*_d: \overline{U}\subset TM\rightarrow M\times M$ with $\overline{U}=\{ v_x\in TM\; |\, -v_x\in U\}$ and defined by 
\[
R^*_d(v_x)=\left(I_M \circ R_d\right)(-v_x)
\]
is also a discretization map called the adjoint discretization map of $R_d$. 
We will say that a discretization map is symmetric if $R^*_d=R_d$.

\begin{example}
Some examples of discretization maps on Euclidean vector spaces are:
\begin{itemize}
	\item Explicit Euler method:  $R_d(x,v)=(x,x+v)$ and its adjoint discretization map $R^*_d(x,v)=(x-v,x)$.
	\item Midpoint rule:  $R_d(x,v)=\left( x-\dfrac{v}{2}, x+\dfrac{v}{2}\right).$ It is a symmetric discretization map.
		\item $\theta$-methods with $\theta\in [0,1]$:  \hspace{3mm} $R_d(x,v)=\left( x-\theta \, v, x+ (1-\theta)\, v\right).$
\end{itemize}
\end{example}

The manifold $M$ in the definition of the discretization map can be replaced by the cotangent bundle $T^*Q$ where the integral curves of Hamilton's equations live. The objective is to define a discretization map on $T^*Q$ to be used to define geometric integrators for Hamilton's equations. A useful one is obtained by cotangently lifting a discretization map $R_d\colon TQ\rightarrow Q\times Q$ on $Q$ in such a way that the resulting map $R^{T^*}_d\colon TT^*Q \rightarrow T^*Q\times T^*Q$ is a symplectomorphism and symplectic integrators can be defined. The following three symplectomorphisms are needed for such a construction (see \cite{21MBLDMdD} for more details): 
\begin{itemize}
\item The cotangent lift of a diffeomorphism $F: M_1\rightarrow M_2$ which is  the symplectomorphism defined by:
	\begin{equation*} 
	\hat{F}: T^*M_1 \longrightarrow  T^*M_2 \mbox{ such that } 
\hat{F}=(TF^{-1})^*.
	\end{equation*}
	\item The canonical symplectomorphism introduced in~\eqref{Eq:TuSymplec}:
	\begin{equation*} \alpha_Q\colon TT^*Q  \longrightarrow  T^*TQ  \mbox{ locally given by } \alpha_Q(q,p,\dot{q},\dot{p})= (q,\dot{q}, \dot{p}, p).
	\end{equation*}

	\item  The symplectomorphism between $(T^*(Q\times Q), \omega_{Q\times Q})$     and   
	$(T^*Q\times T^*Q, \Omega_{12}={\rm pr}_2^*\omega_Q-{\rm pr}^*_1\omega_Q)$, where ${\rm pr}_i\colon T^*Q\times T^*Q\rightarrow T^*Q$ is the projection to the $i$-th factor:
		\begin{equation}\label{eq:symplecticPhi}
	\Phi:T^*Q\times T^*Q \longrightarrow T^*(Q\times Q)\; , \; 
	\Phi(q_0, p_0; q_1, p_1)=(q_0, q_1, -p_0, p_1).	\end{equation}
	\end{itemize}
The following commutative diagram summarizes the construction process from $R_d$ to $R_d^{T^*}=\Phi^{-1}\circ\widehat{R_d}\circ \alpha_Q$:
	\begin{equation}\label{Diagram:cotangentlift}
\xymatrix{ {{TT^*Q }} \ar[rr]^{{{R_d^{T^*}}}}\ar[d]_{\alpha_{Q}} && {{T^*Q\times T^*Q }}  \\ T^*TQ \ar[d]_{\pi_{TQ}}\ar[rr]^{	\widehat{R_d}}&& T^*(Q\times Q)\ar[u]_{\Phi^{-1}}\ar[d]^{\pi_{Q\times Q}}\\ TQ \ar[rr]^{R_d} && Q\times Q }
\end{equation}

\begin{proposition}\citep{21MBLDMdD}
	If $R_d\colon TQ\rightarrow Q\times Q$ is a discretization map on $Q$, then \begin{equation}\label{eq:cotLiftdiscrete}{{R_d^{T^*}=\Phi^{-1}\circ \widehat{R_d}\circ \alpha_Q\colon TT^*Q\rightarrow T^*Q\times T^*Q}}\end{equation}
is a discretization map on $T^*Q$.\label{Prop:above}
\end{proposition}

Consequently, the  discretization map $R_d^{T^*}$ is the right tool to construct symplectic integrators for mechanical systems since it has been obtained by composition of symplectomorphisms. 
 
\begin{corollary}\citep{21MBLDMdD} The discretization map
 ${{R_d^{T^*}}}$ in Proposition~\ref{Prop:above} is a symplectomorphism between $(T(T^*Q), {\rm d}_T \omega_Q)$ and $(T^*Q\times T^*Q, \Omega_{12})$.
\end{corollary}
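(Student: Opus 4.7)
The plan is to read the corollary as an immediate consequence of Proposition~\ref{Prop:above} together with diagram~\eqref{Diagram:cotangentlift}: since $R_d^{T^*}=\Phi^{-1}\circ\widehat{R_d}\circ\alpha_Q$ and the composition of symplectomorphisms is again a symplectomorphism, it suffices to verify that each of the three factors is symplectic with respect to the source and target symplectic structures indicated in the diagram.

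First I would invoke the Tulczyjew isomorphism $\alpha_Q\colon(TT^*Q,{\rm d}_T\omega_Q)\to(T^*TQ,\omega_{TQ})$, already recalled in Section~\ref{Sec:extr-intr} and justified in~\citep{TuHamilton,1999TuUr}; this is the first non-trivial symplectic identification and the only one that mixes the ``tangent of cotangent'' and ``cotangent of tangent'' pictures. Second, the cotangent lift $\widehat{R_d}=(TR_d^{-1})^*$ of any diffeomorphism is a symplectomorphism with respect to the canonical cotangent bundle symplectic forms, since cotangent lifts preserve the Liouville one-form and hence its differential; condition~(2) in the definition of a discretization map, applied via the inverse function theorem, ensures that $R_d$ is a local diffeomorphism on a neighbourhood of the zero section of $TQ$, so $\widehat{R_d}$ is well defined and symplectic on the corresponding open set. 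Third, $\Phi\colon(T^*Q\times T^*Q,\Omega_{12})\to(T^*(Q\times Q),\omega_{Q\times Q})$ is a symplectomorphism, as stated explicitly in the list preceding diagram~\eqref{Diagram:cotangentlift}; consequently so is $\Phi^{-1}$.

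Assembling the three factors then shows that $R_d^{T^*}$ carries $(TT^*Q,{\rm d}_T\omega_Q)$ symplectically to $(T^*Q\times T^*Q,\Omega_{12})$, which is exactly the claim. There is no substantive obstacle here: the only points requiring attention are bookkeeping in the sign convention of $\Phi(q_0,p_0;q_1,p_1)=(q_0,q_1,-p_0,p_1)$, which is precisely what forces the choice $\Omega_{12}={\rm pr}_2^*\omega_Q-{\rm pr}_1^*\omega_Q$ rather than the naive sum, and the understanding that $R_d$ is merely a local diffeomorphism, so the whole statement lives on a suitable open neighbourhood of the zero section of $TT^*Q$. Once these bookkeeping items are in place, the corollary follows essentially formally from Proposition~\ref{Prop:above}.
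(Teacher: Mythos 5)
Your argument is correct and coincides with the paper's own justification: the corollary is obtained precisely by observing that $R_d^{T^*}=\Phi^{-1}\circ\widehat{R_d}\circ\alpha_Q$ is a composition of the three symplectomorphisms listed before Diagram~\eqref{Diagram:cotangentlift}. Your additional remarks on the sign convention in $\Phi$ and on the local nature of $R_d$ are accurate bookkeeping but do not change the route.
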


\begin{example}\label{example3} On $Q={\mathbb R}^n$ the midpoint symmetric discretization map 
	$R_d(q,v)=\left(q-\frac{1}{2}v, q+\frac{1}{2}v\right)$ is cotangently lifted to the symplectomorphism
		$$R_d^{T^*}(q,p,\dot{q},\dot{p})=\left( q-\dfrac{1}{2}\,\dot{q}, p-\dfrac{\dot{p}}{2}; \; q+\dfrac{1}{2}\, \dot{q}, p+\dfrac{\dot{p}}{2}\right)\, .$$
\end{example}

\section{Symplectic integration of holonomic mechanical systems} \label{Sec:SymplHolon}

We now adapt discretization maps to numerically integrate constrained mechanical systems, in particular, the holonomic ones described in Section~\ref{Sec:constrained}. Consider a holonomic system given by a Lagrangian $L: TN\rightarrow {\mathbb R}$, where $N$ is an embedded  submanifold of the ambient manifold $Q$. To obtain a symplectic integrator for such a constrained system the following two ingredients are necessary:
\begin{itemize}
    \item A  discretization map $R_d: TQ\rightarrow Q\times Q$ of the ambient space.
    \item An arbitrary extension $\tilde{L}: TQ\rightarrow {\mathbb R}$  of $L$ such that $\tilde{L}\big|_{TN}=L$.
\end{itemize}

The following results are satisfied by a pair of sufficiently close points $q_k, q_{k+1}$ in $Q$, that is, there exists an  open neighborhood $U$ of $Q\times Q$ containing $(q_k, q_{k+1})$ where $(R_d)^{-1}_{|U}$ is a local diffeomorphism. 

Our focus is to precisely preserve the holonomic constraints for the discrete flow (see ~\citep{2022Joris,2022LeokSOn} for alternative approaches). To guarantee that  is why the continuous holonomic mechanical system must be slightly perturbed.

\begin{definition}\label{definition-perturbed} Let $R_d$ be a discretization map on $Q$ and $L$ be a Lagrangian function on $N$. A {\bf modified constrained variational problem } is given by $\tilde{L}: TQ\rightarrow {\mathbb R}$ such that $\tilde{L}\big|_{TN}=L$ and the constraint submanifold of $TQ$ defined by: 
\begin{equation}\label{eq:TN_Rdv2}
	{\mathcal T}^{R_d}_hN=\left(R_d^h\right)^{-1}(N\times N)\, ,
	\end{equation}	
	where $R^h_d(v_q)=R_d(h\, v_q)$.
\end{definition}
The calligraphic ${\mathcal T}$ is used because the subset of $TQ$ defined in Equation~\eqref{eq:TN_Rdv2} is not necessarily the tangent bundle of $N$, neither does it project onto $N$ under the canonical tangent projection $\tau_Q$. However, it is the right subset to guarantee that the sequence of points generated by the numerical integrator to be obtained always satisfies the constraints and the tangency condition coming from the holonomic constraints. That is why in the continuous setting we have perturbed the original constrained mechanical system to obtain the {\bf modified constrained variational problem } $(\tilde{L},{\mathcal T}_h^{R_d}N)$. Similar to Equation~\eqref{aqr}, the pair $(\tilde{L},{\mathcal T}_h^{R_d}N)$ defines the following Lagrangian submanifold 
\begin{equation}\label{eq:motionConstrained}
\Sigma^d_{\tilde{L}}=\left\{\mu\in T^*TQ\, | \, \mu-{\rm d}\tilde{L}\in \left(T\left({\mathcal T}_h^{R_d}N\right)\right)^0\right\}
\end{equation}
of the symplectic manifold $(T^*TQ,\omega_{TQ})$, where $\left(T\left({\mathcal T}_h^{R_d}N\right)\right)^0$ denotes the annihilator of $T\left({\mathcal T}_h^{R_d}N\right)$ whose fibers are defined as in Equation~\eqref{eq:annih}.

Locally, if $N$ is determined by the vanishing of the constraints $\phi^{\alpha}(q)=0$, then ${\mathcal T}_h^{R_d}N$ is described by the vanishing of the constraint functions $\phi^\alpha_1(q,v)=\left(\phi^{\alpha}\circ \hbox{pr}_1\circ R^h_d\right)(q,v)=0$  and $\phi^\alpha_2(q,v)=\left(\phi^{\alpha}\circ \hbox{pr}_2\circ R^h_d\right)(q,v)=0$ defined on $TQ$, where $\hbox{pr}_l: Q\times Q\rightarrow Q$ is the projection onto the $l$-th factor with $l=1,2$. Thus, any $\mu\in \Sigma^d_{\tilde{L}}$ satisfies that $\pi_{TQ}(\mu)\in {\mathcal T}_h^{R_d}N$ and it is given by
\[
\mu=d\tilde{L}+\lambda_{\alpha}^1 d\phi_1^{\alpha}+\lambda_{\alpha}^2 d\phi_2^{\alpha}\, ,
\]
where $1\leq \alpha\leq m-n$, $\lambda_{\alpha}^1$ and $\lambda_{\alpha}^2$ are Lagrange multipliers to be determined. 

Once again using the Tulczyjew's diffeomorphism, the dynamics of the modified constrained variational problem can also be described by the Lagrangian submanifold  $$\tilde{S}_d=\alpha_Q^{-1}(\Sigma^d_{\tilde{L}})$$ of $(TT^*Q,{\rm d}_T\omega_Q)$.
Locally, the points in 
$\tilde{S}_d$ are given by
\begin{equation*}
\left(q^i, p_i=\frac{\partial \tilde{L}}{\partial \dot{q}^i}
+{\lambda}_{\alpha}^1\frac{\partial \phi_1^{\alpha}}{\partial \dot{q}^i}+\lambda^2_{\alpha}\frac{\partial \phi^{\alpha}_2}{\partial \dot{q}^i}; \dot{q}^i, 
\dot{p}_i= 
\frac{\partial \tilde{L}}{\partial q^i}
+\lambda_{\alpha}^1\frac{\partial\phi_1^{\alpha}}{\partial q^i}+ \lambda^2_{\alpha}\frac{\partial\phi_2^{\alpha}}{\partial q^i}\right)
\end{equation*}
satisfying  $\phi_1^{\alpha}(q,\dot{q})=0$ and $\phi_2^{\alpha}(q, \dot{q})=0$. 
\begin{example}\label{example3a}
If we use the same discretization map as in Example (\ref{example3}), 
\[
R^h_d(q, v)=
\left(q-\frac{h}{2}v, q+\frac{h}{2}v\right),
\]
then the constraints defining ${\mathcal T}_h^{R_d}N$ in Equation~\eqref{eq:TN_Rdv2} are: 
\[
\phi^{\alpha}_1(q, v)=\phi^{\alpha}\left(q-\frac{h}{2}v\right),\qquad \phi^{\alpha}_2(q, v)=\phi^{\alpha}\left(q+\frac{h}{2}v\right)\; .
\]
In the particular case of a mechanical system evolving on the sphere $N=S^2\subset {\mathbb R}^3$ with holonomic constraint $\phi(x,y,z)=x^2+y^2+z^2-1=0$, we have 
\begin{align*}
\phi_1(x,y,z, \dot{x}, \dot{y}, \dot{z})&=\left(x-\frac{h\dot{x}}{2}\right)^2+\left(y-\frac{h\dot{y}}{2}\right)^2+\left(z-\frac{h\dot{z}}{2}\right)^2-1,\\ \phi_2(x,y,z, \dot{x}, \dot{y}, \dot{z})&=\left(x+\frac{h\dot{x}}{2}\right)^2+\left(y+\frac{h\dot{y}}{2}\right)^2+\left(z+\frac{h\dot{z}}{2}\right)^2-1\; .
\end{align*}
These constraints determine the following subset
of $T{\mathbb R}^3=TQ$: 
\[
x^2+y^2+z^2+\frac{h^2}{4}(\dot{x}^2+\dot{y}^2+\dot{z}^2)=1, \qquad
x\dot{x}+y\dot{y}+z\dot{z}=0\; ,
\]
different from $TS^2$ described by $\{x^2+y^2+z^2=1\, , x\dot{x}+y\dot{y}+z\dot{z}=0\}$.
Note that the tangency condition on the right-hand side remains the same as in the sphere, but the holonomic constraint on the left-hand side is different in the modified contrained submanifold.
\end{example}
\subsection{Construction of constrained symplectic integrators}

 We use a similar philosophy as in the paper~\citep{21MBLDMdD} to obtain symplectic integrators for constrained systems defined by holonomic constraints. 
 
Consider the cotangent lift $R_d^{T^*}: TT^*Q\rightarrow T^*Q\times T^*Q$ of the discretization map $R_d\colon TQ \rightarrow Q\times Q$ (see Section~\ref{Sec:Rd}).

As $R_d^{T^*}$ is a symplectomorphism,  $R_d^{T^*}(\alpha_Q^{-1}(\Sigma^d_{\tilde{L}}))=R_d^{T^*}(\tilde{S}_d)$ is a Lagrangian submanifold of $(T^*Q\times T^*Q, \hbox{pr}_2^*(\omega_Q)-\hbox{pr}_1^*(\omega_Q))$. As summarized in Diagram~\eqref{Diagram:cotangentlift},
by construction we have that 
\begin{equation}\label{new-formula}
R_d^{T^*}(\tilde{S}_d)=\left(\Phi^{-1}\circ \widehat{R_d}\right)(\Sigma_{\tilde{L}}^d)\, .
\end{equation}

\begin{lemma} \label{Lemma:Sd}
If $(q_0, p_0; q_1, p_1)\in R_d^{T^*}(\tilde{S}_d) $, then 
\[\left(q_0, p_0+\eta^1_{\alpha}\,  {\rm d}\phi^{\alpha}(q_0); q_1, p_1+\eta^2_{\alpha}\,  {\rm d}\phi^{\alpha}(q_1)\right)\in R_d^{T^*}(\tilde{S}_d)\, .\]
\end{lemma}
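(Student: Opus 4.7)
The plan is to exploit the two-parameter family of Lagrange multipliers in the local description of $\Sigma^d_{\tilde{L}}$, together with the explicit form of the cotangent lift $\Phi^{-1}\circ\widehat{R^h_d}$ given by Equation~\eqref{new-formula}. Any point of $(R^h_d)^{T^*}(\tilde{S}_d)$ arises from some $\mu\in\Sigma^d_{\tilde{L}}$ of the form $\mu = {\rm d}\tilde{L}(q,v) + \lambda^1_{\alpha}\,{\rm d}\phi_1^{\alpha}(q,v) + \lambda^2_{\alpha}\,{\rm d}\phi_2^{\alpha}(q,v)$ with $(q,v)\in{\mathcal T}^{R^h_d}N$. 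Since the multipliers can be reassigned freely at each fibre without leaving $\Sigma^d_{\tilde{L}}$, the whole content of the lemma is to track how an adjustment of $(\lambda^1_{\alpha},\lambda^2_{\alpha})$ by an increment $(-\eta^1_{\alpha},\eta^2_{\alpha})$ translates, under $\Phi^{-1}\circ\widehat{R^h_d}$, into the stated shifts of $(p_0,p_1)$ while leaving $(q_0,q_1)$ fixed.

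The computation I would then carry out is in two steps. Step one: identify the image of the differentials of the perturbed constraints under $\widehat{R^h_d}$. Because $\phi_i^{\alpha}=\phi^{\alpha}\circ\mathrm{pr}_i\circ R^h_d$ and $\widehat{R^h_d}=\bigl(T(R^h_d)^{-1}\bigr)^*$, a short chain-rule computation gives
\begin{equation*}
\widehat{R^h_d}({\rm d}\phi_i^{\alpha}) = \mathrm{pr}_i^*\,{\rm d}\phi^{\alpha}\in T^*_{(q_0,q_1)}(Q\times Q),\qquad i=1,2,
\end{equation*}
where $(q_0,q_1)=R^h_d(q,v)$. In the coordinates $(q_0,q_1,P_0,P_1)$ on $T^*(Q\times Q)$, this says that adding $\lambda^1_{\alpha}\,{\rm d}\phi_1^{\alpha}$ to $\mu$ shifts $P_0$ by $\lambda^1_{\alpha}\,{\rm d}\phi^{\alpha}(q_0)$ and leaves $P_1$ untouched, and symmetrically for $\phi_2^{\alpha}$. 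The base point $(q_0,q_1)$ is unaffected since it depends on $(q,v)$ only through $R^h_d$.

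Step two: apply $\Phi^{-1}$, whose coordinate expression $\Phi^{-1}(q_0,q_1,P_0,P_1)=(q_0,-P_0;q_1,P_1)$ distributes the $P_0$-shift into $p_0$ (with a sign) and the $P_1$-shift into $p_1$. Replacing $(\lambda^1_{\alpha},\lambda^2_{\alpha})$ by $(\lambda^1_{\alpha}-\eta^1_{\alpha},\lambda^2_{\alpha}+\eta^2_{\alpha})$ keeps us inside $\Sigma^d_{\tilde{L}}$ over the same $(q,v)$, and after pushing through $\Phi^{-1}\circ\widehat{R^h_d}$ produces precisely the point $\bigl(q_0,p_0+\eta^1_{\alpha}\,{\rm d}\phi^{\alpha}(q_0);\,q_1,p_1+\eta^2_{\alpha}\,{\rm d}\phi^{\alpha}(q_1)\bigr)$, proving the claim. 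The main obstacle I expect is not mathematical but bookkeeping: keeping the sign convention of $\Phi$ straight and making the identification $\widehat{R^h_d}({\rm d}\phi_i^{\alpha})=\mathrm{pr}_i^*\,{\rm d}\phi^{\alpha}$ rigorous, since these differentials live in $T^*TQ$ and $T^*(Q\times Q)$ respectively and the argument relies on the fact that $\mathrm{pr}_i\circ R^h_d$ intertwines with $\mathrm{pr}_i$ after applying $T(R^h_d)^{-1}$.
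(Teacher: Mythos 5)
Your proposal is correct and follows essentially the same route as the paper: the paper's proof simply invokes the linearity in the momenta of the maps in Equation~\eqref{new-formula} together with the identities $\phi^{\alpha}(q_0)=\phi^{\alpha}_1(q,v)$ and $\phi^{\alpha}(q_1)=\phi^{\alpha}_2(q,v)$, which is exactly the freedom in the Lagrange multipliers that you exploit. Your computation $\widehat{R^h_d}({\rm d}\phi_i^{\alpha})=\mathrm{pr}_i^*\,{\rm d}\phi^{\alpha}$ and the sign bookkeeping through $\Phi^{-1}$ just make explicit what the paper leaves implicit.
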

\begin{proof}
The linearity in the  momenta of the applications involved in Equation~\eqref{new-formula}, together with the definition of ${\mathcal T}_h^{R_d}N$, guarantees the result. Remember that  $\phi^\alpha(q_0)=\phi^{\alpha}_1(q,v)$ and $\phi^\alpha(q_1)=\phi^{\alpha}_2(q,v)$, where $R_d(q,v)=(q_0,q_1)$.
\end{proof}

Consequently, the momenta in the Lagrangian submanifold associated with the discrete version of holonomic mechanical systems embedded into $T^*Q\times T^*Q$ are not uniquely determined. They depend on the choice of $\eta_{\alpha}^1$ and $\eta_{\alpha}^2$ in Lemma \ref{Lemma:Sd}.  

Similar to the continuous case described in Section~\ref{Sec:extr-intr} for the intrinsic description, the Lagrangian submanifold  $R_d^{T^*}(\tilde{S}_d)$ of $T^*Q\times T^*Q$ projects onto $N\times N$. Moreover, it can be projected to a  Lagrangian submanifold of $T^*N\times T^*N$ by the map $(i^*_N\times i^*_N)$.

\begin{proposition} \label{Prop:SNd}
The Lagrangian submanifold $R_d^{T^*}(\tilde{S}_d)$ of $(T^*Q\times T^*Q, \hbox{\rm pr}_2^*(\omega_Q)-\hbox{\rm pr}_1^*(\omega_Q))$ defines the Lagrangian submanifold
$ S_N^d$ of $(T^*N\times T^*N, \hbox{\rm pr}_2^*(\omega_N)-\hbox{\rm pr}_1^*(\omega_N))$ given by 
\begin{equation*} \label{eq:SNd}
S_N^d=(i^*_N\times i^*_N)\left(R_d^{T^*}(\tilde{S}_d)\right)\,.
\end{equation*}
\end{proposition}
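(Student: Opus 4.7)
My plan is to interpret the assertion as an instance of coisotropic/cotangent reduction, since the map $i_N^*\times i_N^*$ is precisely the reduction of the coisotropic submanifold $T^*Q|_N\times T^*Q|_N$ inside $(T^*Q\times T^*Q,\hbox{pr}_2^*\omega_Q-\hbox{pr}_1^*\omega_Q)$. The ambient piece I will use is the classical identity $(i_N^*)^*\theta_N=j^*\theta_Q$, where $j\colon T^*Q|_N\hookrightarrow T^*Q$ is the inclusion and $\theta_N,\theta_Q$ are the Liouville one-forms; differentiating yields $(i_N^*)^*\omega_N=j^*\omega_Q$. Taking products of the two factors then gives
\[
(i_N^*\times i_N^*)^*\bigl(\hbox{pr}_2^*\omega_N-\hbox{pr}_1^*\omega_N\bigr)=(\hbox{pr}_2^*\omega_Q-\hbox{pr}_1^*\omega_Q)\big|_{T^*Q|_N\times T^*Q|_N}.
\]

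First, I will verify that $(R^h_d)^{T^*}(\tilde S_d)$ actually lies in $T^*Q|_N\times T^*Q|_N$, so that the map $i_N^*\times i_N^*$ may be applied to it. This is automatic: $\tilde S_d$ fibres over $\mathcal T^{R^h_d}N=(R^h_d)^{-1}(N\times N)$, and $(R^h_d)^{T^*}$ covers $R^h_d$ on the base by construction of Diagram~\eqref{Diagram:cotangentlift}, so the image projects to $N\times N$. Combined with the displayed identity, isotropy of $S_N^d$ is inherited from the fact that $(R^h_d)^{T^*}(\tilde S_d)$ is Lagrangian in $(T^*Q\times T^*Q,\Omega_{12})$: its tangent spaces are $\Omega_{12}$-isotropic, hence their push-forwards by $i_N^*\times i_N^*$ are $(\hbox{pr}_2^*\omega_N-\hbox{pr}_1^*\omega_N)$-isotropic.

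Second, I have to upgrade isotropy to Lagrangian, which amounts to a dimension count. Since $\tilde S_d$ is Lagrangian in $TT^*Q$ and $(R^h_d)^{T^*}$ is a symplectomorphism, $\dim(R^h_d)^{T^*}(\tilde S_d)=2m$. Lemma~\ref{Lemma:Sd} identifies the fibres of $i_N^*\times i_N^*$ restricted to this set: the kernel of $i_N^*$ at a point above $q\in N$ is the annihilator of $T_qN$ in $T^*_qQ$, spanned by $\{{\rm d}\phi^\alpha(q)\}_{\alpha=1}^{m-n}$, so the fibres have dimension $2(m-n)$. A constant-rank argument then yields that $S_N^d$ is a smooth $2n$-dimensional submanifold of $T^*N\times T^*N$, which is exactly half of $\dim(T^*N\times T^*N)=4n$.

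The main obstacle I anticipate is the constant-rank step: I must argue that the fibre ambiguity described by Lemma~\ref{Lemma:Sd} genuinely exhausts $(R^h_d)^{T^*}(\tilde S_d)\cap\ker(i_N^*\times i_N^*)_*$ and has constant dimension along the submanifold, so that the image is not merely a set but an embedded submanifold with the expected dimension. This reduces to checking that the shift by $(\eta^1_\alpha{\rm d}\phi^\alpha(q_0),\eta^2_\alpha{\rm d}\phi^\alpha(q_1))$ parametrises the full kernel transversally inside $(R^h_d)^{T^*}(\tilde S_d)$, which in turn follows from the explicit local expression of $\tilde S_d$ displayed just before Section~4.1, where the Lagrange multipliers $\lambda^1_\alpha,\lambda^2_\alpha$ enter linearly and freely in the momentum components. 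With that in hand, the coisotropic-reduction argument above closes the proof.
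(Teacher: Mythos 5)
Your argument is correct, and it reaches the conclusion by a different route than the paper. The paper's proof has two steps: first it uses the commutativity of the diagram relating $(R^h_d)^{T^*}$ and $R^h_d$, together with the definition ${\mathcal T}^{R^h_d}N=(R^h_d)^{-1}(N\times N)$, to conclude that $(R^h_d)^{T^*}(\tilde S_d)\subset T^*_NQ\times T^*_NQ$ (you do exactly the same); but then, instead of carrying out the reduction by hand, it observes that $i_N\times i_N$ and $({\rm pr}_1\times{\rm pr}_2)|_{(R^h_d)^{T^*}(\tilde S_d)}$ intersect transversally and invokes the general Guillemin--Sternberg result on pulling back Lagrangian submanifolds along a map of base manifolds. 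Your proof replaces that citation with an explicit coisotropic-reduction computation: the Liouville-form identity $(i_N^*)^*\theta_N=j^*\theta_Q$ gives isotropy of the image for free, and the half-dimension count follows from Lemma~\ref{Lemma:Sd}, which identifies the fibres of $i_N^*\times i_N^*$ over the image with the $2(m-n)$-dimensional affine leaves $\bigl(p_0+(T_{q_0}N)^0\bigr)\times\bigl(p_1+(T_{q_1}N)^0\bigr)$, so that $\dim S_N^d=2m-2(m-n)=2n=\tfrac12\dim(T^*N\times T^*N)$. What you gain is a self-contained argument that makes transparent \emph{why} the momenta upstairs are not uniquely determined (the leaves are precisely the multiplier ambiguity of Lemma~\ref{Lemma:Sd}), at the cost of having to justify the constant-rank/smoothness step yourself --- which you correctly identify as the one delicate point and resolve by noting that the multipliers $\lambda^1_\alpha,\lambda^2_\alpha$ enter the local description of $\tilde S_d$ linearly and freely. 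The paper's route is shorter but hides both the cleanness hypothesis and the fibre structure inside the cited theorem; the two mechanisms are, at bottom, the same symplectic reduction.
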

\begin{proof}
The commutativity of the following diagram

\centerline{\xymatrix{  TT^*Q \ar[d]^{{\rm T}\pi_Q}  \ar[rr]^{R_d^{T^*}} && T^*Q\times T^*Q\ar[d]^{\hbox{pr}_1\times \hbox{pr}_2} \\TQ \ar[rr]^{R_d} && Q\times Q
}}
\noindent and the construction of ${\mathcal T}_h^{R_d}N$ in Equation~\eqref{eq:TN_Rdv2} guarantee that $R_d^{T^*}(\tilde{S}_d)$ projects onto $i_N(N)\times i_N(N)\equiv N\times N$ by $\hbox{pr}_1\times \hbox{ pr}_2$. In other words, $R_d^{T^*}(\tilde{S}_d) \subseteq T_N^*Q\times T_N^*Q$, where $T_N^*Q=\pi_Q^{-1}(N)$. 

As $i_{N}: N\hookrightarrow Q$ is the inclusion map, the map $i_N^*: T_N^*Q\rightarrow T^*N$ is well-defined and given by
\[
\langle i_N^*(\alpha_q), v_q\rangle
=\langle \alpha_q, {\rm T}i_N(v_q)\rangle\; ,
\]
for all $\alpha_q\in T_N^*Q$ and $v_q\in T_qN$. Hence, we can consider the subset 
\[
S_N^d=(i^*_{N}\times i^*_{N})\left(R_d^{T^*}(\tilde{S}_d)\right)
\]
of the symplectic manifold $(T^*N\times T^*N, \hbox{pr}_2^*(\omega_N)-\hbox{pr}_1^*(\omega_N))$.

Note that the maps
 $i_N\times i_N$ and $({\rm pr}_1\times {\rm pr}_2)|_{R_d^{T^*}(\tilde{S}_d)}$ are transverse\footnote{We recall that if $M, N, P$ are differentiable manifolds and $f: M\rightarrow P$ and $g: N\rightarrow P$ are two differentiable maps, we say that $f$ and $g$ are transversal if
 $T_xf(T_xM)+T_yg(T_yN)=T_zP$ for all $x\in M$, $y\in N$ verifying $z=f(x)=g(y)$.}. By \cite[Section 4.3]{13GuiStern}, we deduce that $S^d_N$
is a Lagrangian submanifold of $(T^*N\times T^*N, \hbox{pr}_2^*(\omega_N)-\hbox{pr}_1^*(\omega_N))$.
\end{proof}

Lemma~\ref{Lemma:Sd} shows that the correspondence between the two Lagrangian submanifolds in Proposition~\ref{Prop:SNd} is not one-to-one, but there are infinitely many momenta in $T^*Q\times T^*Q$ that correspond with the same one in $S^d_N$.

The above discussion makes it  possible to construct symplectic integrators for holonomic mechanical systems as stated in the following result.

\begin{theorem}\label{Theorem:main} Let $L: TN\rightarrow {\mathbb R}$ be a regular Lagrangian function where $N$ is a submanifold of $Q$ and $R_d \colon TQ \rightarrow Q\times Q$ be a discretization map. Let $\tilde{L}: TQ\rightarrow {\mathbb R}$ be an arbitrary regular extension of $L$ to $TQ$. 
Let $\tau_{T^*Q}\colon TT^*Q\rightarrow T^*Q$ be the canonical tangent projection, then the following conditions
\begin{align} 
\dfrac{1}{h}\, \left(R_d^{T^*}\right)^{-1}(q_0, p_0; q_1, p_1)&\in   \tilde{S}_d\left(\left(\tau_{T^*Q}\circ \left(\dfrac{1}{h}\, \left(R_d^{T^*}\right)^{-1}\right)\right)(q_0, p_0; q_1, p_1)\right), \label{Eq:HMethod} \\
(q_0,p_0;q_1,p_1)&\in Leg_{\tilde{L}}(TN) \times Leg_{\tilde{L}}(TN),\label{Eq:HMethod2}
\end{align}
define a constrained symplectic integrator for $L$ on $T^*_NQ$, where $Leg_{\tilde{L}}: TQ\rightarrow T^*Q$ is the Legendre transformation associated to $\tilde{L}: TQ\rightarrow {\mathbb R}$.

Alternatively, the set of discrete equations given by 
\begin{eqnarray} 
(q_0, p^N_0; q_1, p^N_1)&\in&   (i^*_{N}\times i^*_{N})\left( R_d^{T^*}\left(h\, \tilde{S}_d\right)\right)\label{Eq:HMethod-N} 
\end{eqnarray}
defines a symplectic integrator for $L$ where $(q_i, p_i^N)\in T^*N$, $i=0, 1$.(Here, the notation $h\tilde{S}_d$ means fiber multiplication by $h$ with respect to the vector bundle structure $\tau_{T^*Q}: TT^*Q\rightarrow T^*Q$ of the elements of $\tilde{S}_d$.)
\end{theorem}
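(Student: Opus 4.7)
The plan is to read Theorem~\ref{Theorem:main} as a packaging of the Lagrangian-submanifold results already established, once they are interpreted as defining maps. The core fact I will appeal to is the standard one: a Lagrangian submanifold of $(T^*N\times T^*N,\operatorname{pr}_2^*\omega_N-\operatorname{pr}_1^*\omega_N)$ which is the graph of a (local) diffeomorphism is the graph of a symplectomorphism; analogously in $T^*Q\times T^*Q$ with the form $\Omega_{12}=\operatorname{pr}_2^*\omega_Q-\operatorname{pr}_1^*\omega_Q$.

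First I will handle the intrinsic formulation~\eqref{Eq:HMethod-N}. Proposition~\ref{Prop:SNd} already establishes that $S_N^d=(i_N^*\times i_N^*)((R^h_d)^{T^*}(\tilde S_d))$ is a Lagrangian submanifold of $(T^*N\times T^*N,\operatorname{pr}_2^*\omega_N-\operatorname{pr}_1^*\omega_N)$, so it suffices to prove that locally $S_N^d$ is the graph of a diffeomorphism $T^*N\to T^*N$. This will follow from two pieces: (i) $R^h_d$ is a local diffeomorphism near the zero section, hence so is its cotangent lift $(R^h_d)^{T^*}$, which means $(R^h_d)^{T^*}(\tilde S_d)$ is as large as $\tilde S_d$; (ii) the regularity of $L$ (together with a regularity assumption on $\tilde L$, or more precisely the fact that $\tilde L|_{TN}=L$ is regular, so its Legendre transform is a local diffeomorphism from $TN$ onto $\operatorname{Leg}_{\tilde L}(TN)\subseteq T^*_N Q$). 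Combined with the implicit function theorem applied to the defining constraints $\phi_1^\alpha,\phi_2^\alpha$ and the multiplier equations, this gives local solvability. Once $S_N^d$ is shown to be a graph of a diffeomorphism, the Lagrangian property forces it to be symplectic, which is exactly what is required to call the induced map a symplectic integrator.

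Second, I will treat the extrinsic equations~\eqref{Eq:HMethod}-\eqref{Eq:HMethod2}. Unwinding~\eqref{Eq:HMethod}, the condition $((R^h_d)^{T^*})^{-1}(q_0,p_0;q_1,p_1)\in \tilde S_d$ is exactly the statement that $(q_0,p_0;q_1,p_1)\in (R^h_d)^{T^*}(\tilde S_d)$, a Lagrangian submanifold of $(T^*Q\times T^*Q,\Omega_{12})$. By Lemma~\ref{Lemma:Sd} this submanifold is however not a graph, as each admissible pair admits an infinite family of lifts differing by $\eta_\alpha^i\,\mathrm{d}\phi^\alpha(q_i)$, which belong to the annihilator of $T_{q_i}N$. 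The role of~\eqref{Eq:HMethod2} is to fix this ambiguity: the condition $(q_i,p_i)\in \operatorname{Leg}_{\tilde L}(TN)$ cuts out a codimension-$(m-n)$ slice transversal to the annihilator directions at each endpoint, so that precisely one representative of every class survives. I will then verify that the projection $(i_N^*\times i_N^*)$ maps this slice bijectively onto $S_N^d$; this makes the induced map on $\operatorname{Leg}_{\tilde L}(TN)$ conjugate, via $i_N^*$, to the intrinsic symplectic map on $T^*N$ obtained in the first step, which is the sense in which~\eqref{Eq:HMethod}-\eqref{Eq:HMethod2} is a constrained symplectic integrator for $L$.

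The main obstacle, and where I would be most careful in the write-up, is the transversality/uniqueness argument that produces genuine maps out of the Lagrangian submanifolds. In the extrinsic formulation one must check simultaneously that the multipliers $\lambda_\alpha^1,\lambda_\alpha^2$ in $\tilde S_d$ and the lift ambiguity $\eta_\alpha^1,\eta_\alpha^2$ in Lemma~\ref{Lemma:Sd} are compatibly determined by the Legendre constraint~\eqref{Eq:HMethod2}, so that both endpoints $(q_i,p_i)$ are fixed uniquely; this is the counterpart in the discrete setting of the non-uniqueness of $\lambda_\alpha,\tilde\lambda_\alpha$ encountered in the proof of Proposition~\ref{prop:extr=intrin}, and resolving it is what justifies calling the scheme a \emph{symplectic} integrator rather than merely a symplectic correspondence.
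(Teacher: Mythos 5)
Your proposal follows essentially the same route as the paper's proof: symplecticity is obtained from the fact that $\tilde S_d$ is Lagrangian (hence so are $(R^h_d)^{T^*}(\tilde S_d)$ and, via Proposition~\ref{Prop:SNd}, $S_N^d$), while Equation~\eqref{Eq:HMethod2} removes the momentum/multiplier ambiguity identified in Lemma~\ref{Lemma:Sd} so that a genuine map on $Leg_{\tilde L}(TN)$, equivalently on $T^*N$, is induced. You are merely more explicit than the paper about verifying that these Lagrangian submanifolds are locally graphs of diffeomorphisms, a point the paper compresses into the phrase ``under the assumption of regularity.''
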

\begin{proof}
Equations~\eqref{Eq:HMethod} and~\eqref{Eq:HMethod-N} define  symplectic methods because $\tilde{S}_d$ is a Lagrangian submanifold that guarantees the symplecticity (see Proposition \ref{Prop:SNd}). Equation~\eqref{Eq:HMethod2} uniquely determines the value of the Lagrange multipliers. 
    
   Moreover,  Proposition~\ref{Prop:SNd} guarantees that from $\tilde{S}_d$ it is possible to construct the Lagrangian submanifold $ S_N^d$ of $(T^*N\times T^*N, \hbox{pr}_2^*(\omega_N)-\hbox{pr}_1^*(\omega_N))$. Thus, $(i_N^*\times i_N^*) (q_0,p_0;q_1,p_1)$ lives in $ T^*N \times T^*N$ and defines a symplectic integrator for $L$ under the assumption of regularity.
\end{proof}

Alternatively, we can write Equations (\ref{Eq:HMethod}) and (\ref{Eq:HMethod2}) as 
\begin{equation*} 
(q_0, p_0; q_1, p_1)\; \in \;  R_d^{T^*}({h}\tilde{S}_d)\cap \left(Leg_{\tilde{L}}(TN) \times Leg_{\tilde{L}}(TN)\right)\, .\label{Eq:HMethod-alte-1} 
\end{equation*} 
As stated in Lemma~\ref{Lemma:Sd}, the condition $(q_0, q_1)\in N\times N$ and Equation~\eqref{Eq:HMethod}  do not determine unique solutions for momenta in $T^*_NQ \times T^*_NQ$.
However, the additional Equation~\eqref{Eq:HMethod2} does.  Thus,  Equations~\eqref{Eq:HMethod} and~\eqref{Eq:HMethod2} define a symplectic integrator on $T^*N\times T^*N$ as shown in Equation~\eqref{Eq:HMethod-N}.

\subsection{A geometrical interpretation of the discrete null space method}
Let $\{X_a\}$ be a basis of vector fields on $N$ that can be embedded into $T_qQ$ by the tangent map ${\rm T}i_N\colon T_qN\rightarrow T_{{\rm i}_N(q)}Q$. For any element $p^Q$  in $T_N^*Q$, a momenta $p^N$ in $T^*N$
is uniquely determined by the following projection:
\begin{equation}\label{eq:pN_pQ}
\left\langle p^N, X_a(q)\right\rangle_{T^*N}=\left\langle p^Q, {\rm T}i_N\left(X_a(q)\right)\right\rangle_{T^*Q}\, .
\end{equation}
These $n$-equations univocally determine $p^N$ from $p^Q$.
 Equation (\ref{Eq:HMethod-alte-1}) can be rewritten as: 
\begin{eqnarray} 
\hspace{-0.5cm} p^Q_{0} &\in & {\rm pr}_1\left(R_d^{T^*}({h}\tilde{S}_d)\right)\cap (\pi_Q)^{-1}(q_0), \label{Eq:null-1}\\
\hspace{-0.5cm} p^Q_{1} &\in & {\rm pr}_2\left(R_d^{T^*}({h}\tilde{S}_d)\right)\cap (\pi_Q)^{-1}(q_1), \label{Eq:null-2}\\
\left(p^Q_{0},p^Q_{1}\right)&\in& Leg_{\tilde{L}}(TN) \times Leg_{\tilde{L}}(TN)\, .\nonumber
\end{eqnarray}

Equations~\eqref{eq:pN_pQ}, \eqref{Eq:null-1} and \eqref{Eq:null-2} give a geometrical interpretation of the  null space condition in~\cite{Betsch, BetschII}  to obtain numerical integrators for holonomic mechanical systems without Lagrange multipliers (see Section \ref{Sec:ExampleEuclid}). 

 The following diagram illustrates the construction process:

\centerline{\xymatrix{TT^*Q \ar[d]_{{\rm T}\pi_Q}   && {h}\tilde{S}_d \ar@{_{(}->}[ll] \ar[d]_{{\rm T}\pi_Q|_{{h}\tilde{S}_d}}  \ar[rr]^(.4){R_d^{T^*}|_{{h}\tilde{S}_d}} && T_N^*Q\times T_N^*Q\ar[d]^{{\rm pr}_1\times {\rm pr}_2} \ar[rr]^{i_N^*\times i_N^*} && T^*N \times T^*N \ar[dll]^{{\rm pr}_1\times {\rm pr}_2}\\ TQ && {\mathcal T}_h^{R_d}N\ar@{_{(}->}[ll]\ar[rr]^{R^h_d} && N\times N
}}
\vspace{2mm}

Note that $R_d^{T^*}({h}{\tilde{S}_d})$ is a $2m$-dimensional submanifold of $T^*_NQ\times T^*_NQ$ by definition {with $m=\dim Q$}. Moreover, {in general}, $\tilde{S}_d$ does not project onto $T_N^*Q$ by the canonical projection $\tau_{T^*Q}\colon TT^*Q \rightarrow T^*Q$ because the base point does not necessarily live in $N$, but in $Q$. As described in Theorem~\ref{Theorem:main} we have imposed that the holonomic constraints are exactly satisfied by the discrete flow, not necessarily in the continuous counterpart.   
Indeed, the corresponding point $\left(R^h_d\right)^{-1}(q_0,q_1)$ in the continuous setting lives in ${\mathcal T}_h^{R_d}N$,  defined in Equation~\eqref{eq:TN_Rdv2}, that usually is not equal to $TN$, understood as a submanifold of $TQ$.

In the following section, we will show that our construction provides a new interpretation of well-known methods for constrained mechanics, as the ones described in~\cite{LeRe}. 

\section[Application for mechanical Lagrangian systems]{Application to
mechanical Lagrangian systems with holonomic constraints} \label{Sec:ExampleEuclid}

Assume that $\tilde{L}\colon TQ \rightarrow \mathbb{R}$ is a regular Lagrangian such that $\tilde{L}|_{TN}=L\colon TN \rightarrow \mathbb{R}$, that is, it is an extension of the Lagrangian $L$ to $TQ$. To extrinsically obtain a numerical integrator for the corresponding holonomic Euler-Lagrange equations, we follow the construction in Theorem \ref{Theorem:main}. For every starting point $(q_0,p_0)\in {\rm Leg}_{\tilde{L}}(TN)$ there exists a unique $(q_1,p_1)\in {\rm Leg}_{\tilde{L}}(TN)$ satisfying Equations~\eqref{Eq:HMethod} and~\eqref{Eq:HMethod2}.
The map $i_N^*\colon T^*_NQ\rightarrow T^*N$ associated to  the natural inclusion $i_N\colon N\hookrightarrow Q$ can be used to (non-canonically) identify  ${\rm Leg}_{\tilde{L}}(TN)$ to $T^*N$.

The following diagram shows how to move between the different spaces.

\centerline{\xymatrix{  && T_N^*Q\times T_N^*Q  \ar[rr]^{i_N^*\times i_N^*} && T^*N \times T^* N  \ar[d]^{{\rm Leg}_L^{-1}\times {\rm Leg}_L^{-1}}   \\ TT^*Q  \supset{h}\tilde{S}_d \ar[d]_{{\rm T}\pi_Q}  \ar[rru]^{R_d^{T^*}|_{{h}\tilde{S}_d}}&& {\rm Leg}_{\tilde{L}}(TN)\times{\rm Leg}_{\tilde{L}}(TN)  \ar[urr]^{\left(i_N^*\times i_N^*\right)|_{{\rm Leg}_{\tilde{L}}(TN)  }} \ar@{^(->}[u] \ar[d]^{\pi_Q\times \pi_Q}  && TN\times TN  \ar[ll]_(0.4){{\rm Leg}_{\tilde{L}}\times {\rm Leg}_{\tilde{L}} }\ar[dll]^{\tau_N\times \tau_N}\\{\mathcal T}_h^{R_d}N\ar[rr]^{{R^h_d}} && N\times N &&
}}
\vspace{2mm}
Let us rewrite Equation~\eqref{eq:pN_pQ} when local coordinates adapted to $N$ are used, that is, $N=\{q=(q^a,q^\alpha)\in Q \, | \, \phi^\alpha(q)=q^\alpha-\Phi^\alpha(q^a)=0\}$. As   
$$i_N^*(q^a,p^Q_i)=\left(q^a,p^Q_a+p^Q_\alpha \dfrac{\partial \Phi^\alpha}{\partial q^a} \right)=(q^a,p_a^N)\, ,$$
Equation~\eqref{eq:pN_pQ} becomes
$$\langle p^Q, {\rm T}i_N\left(X_a(q)\right)\rangle_{T^*Q}=\langle i^*_N(p), X_a(q)\rangle_{T^*N}= \left\langle p^Q_a+p^Q_\alpha \, \dfrac{\partial \Phi^\alpha}{\partial q^a}, X_a(q)\right\rangle_{T^*N},$$
where the subscripts emphasize the manifold where the natural pairings are computed.

\subsection{On Euclidean vector spaces}
In this section, the manifold $Q$ is taken to be ${\mathbb R}^m$ and consider the mechanical Lagrangian $\tilde{L}: T{\mathbb R}^m\rightarrow {\mathbb R}$ given by
\[
\tilde{L}(q, v)=\frac{1}{2}v^T{\mathbf M}v-V(q)\, ,
\]
where ${\mathbf M}$ represents  a positive definite and symmetric mass matrix and $V$ is a potential function on $Q$. We also assume that the system is subjected to   the holonomic constraints $\phi^\alpha(q)=0$, $1\leq \alpha\leq m-n$, that define the submanifold $N$ of $Q$.
\subsubsection{Symplectic Euler method-A}\label{section:euler-a}

From the retraction map $R(q,v)=q+v$, we define the following discretization map  
\[
R_d (q, v)=(q, q+v) \, ,
\]
whose inverse map is $$(R_d)^{-1}(q_0, q_1)=\left(q_0, q_1-q_0\right).$$
The modified submanifold  ${\mathcal T}_h^{R_d}N=(R^h_d)^{-1}(N\times N)$ is given by the points $(q,v)$ in $TQ$ such that
\[
\phi^\alpha_1(q, v)=\phi^{\alpha}(q)=0\ \hbox{  and }\  \phi^\alpha_2(q, v)=\phi^\alpha (q+hv)=0\, .
\]
Moreover, 
\begin{eqnarray*}
R_d^{T^*}(q, p, \dot{q}, \dot{p})&=&\left(q,p-\dot{p}; q+\dot{q}, p \right)  \,,  \\
\dfrac{1}{h}\, \left(R_d^{T^*}\right)^{-1}(q_0, p_0; q_1, p_1)
&=&\left(q_0,p_1; \dfrac{q_1-q_0}{h}, \dfrac{p_1-p_0}{h}\right)   \,,\end{eqnarray*}
because the factor $1/h$ only multiplies the fibers of $TT^*Q$ with respect to $\tau_{T^*Q}$.
After some computations, Equations~\eqref{Eq:HMethod} and~\eqref{Eq:HMethod2} become:
\begin{eqnarray*}
p_1&=&{\mathbf M}\frac{q_1-q_0}{h}+h\tilde{\lambda}_\alpha \nabla \phi^{\alpha}  (q_1)\, ,\\
\frac{p_1-p_0}{h}&=&-\nabla V(q_0)+\lambda_\alpha \nabla \phi^\alpha (q_0)+\tilde{\lambda}_\alpha\nabla  \phi^\alpha  (q_1) \, ,\\
(q_0,p_0)&\in& {\rm Leg}_{\tilde{L}}(TN),  \qquad  (q_1,p_1)\in {\rm Leg}_{\tilde{L}}(TN).
\end{eqnarray*}
Consequently, these equations rewritten as in Equations~\eqref{Eq:HMethod-N} 
define a symplectic method on $T^*N$.
Alternatively, we can write the above equations as follows 
\begin{eqnarray*}
p_0&=&{\mathbf M}\left(\frac{q_1-q_0}{h}\right)+h\nabla V(q_0)-h\lambda_\alpha \nabla \phi^\alpha (q_0)\, ,\\
p_1&=&{\mathbf M}\left(\frac{q_1-q_0}{h}\right)+h\tilde{\lambda}_\alpha \nabla \phi^{\alpha}  (q_1)\, ,\\
{(q_0, {\mathbf M}^{-1}p_0)}&\in&TN,  \qquad  {(q_1,{\mathbf M}^{-1}p_1)\in TN.}
\end{eqnarray*}

\subsubsection{Symplectic Euler method-B}
\label{section:euler-b}

From the retraction map $R(q,v)=q-v$, we construct the following discretization map  
\[
R_d (q, v)=(q-v, q) \, ,
\]
whose inverse map is  $$(R_d)^{-1}(q_0, q_1)=\left(q_1, q_1-q_0\right).$$
Then ${\mathcal T}_h^{R_d}N=(R^h_d)^{-1}(N\times N)$ is given by the points $(q,v)$ in $TQ$ such that
\[
\phi^\alpha_1(q, v)=\phi^\alpha(q-hv)=0 \hbox{  and }  \phi^\alpha_2(q, v)=\phi^\alpha (q)=0\, .
\]
Moreover, 
\begin{eqnarray*}
R_d^{T^*}(q, p, \dot{q}, \dot{p})&=&\left(q-\dot{q},p; q, \dot{p}+p \right)  \,,  \\
\dfrac{1}{h} \left(R_d^{T^*}\right)^{-1}(q_0, p_0; q_1, p_1)
&=&\left(q_1,p_0; \dfrac{q_1-q_0}{h}, \dfrac{p_1-p_0}{h}\right)   \,.\\ \,.\end{eqnarray*}
After some computations,  Equations~\eqref{Eq:HMethod} and~\eqref{Eq:HMethod2} lead to the following constrained symplectic method 
\begin{eqnarray}
p_0&=&{\mathbf M}\frac{q_1-q_0}{h}-h{\lambda}_\alpha \nabla \phi^{\alpha}  (q_0)\, ,  \label{Eq:EulerB1}\\
\frac{p_1-p_0}{h}&=&-\nabla V(q_1)+\lambda_\alpha \nabla \phi^\alpha (q_0)+\tilde{\lambda}_\alpha\nabla  \phi^\alpha  (q_1)\, , \label{Eq:EulerB2}\\
(q_0,p_0)&\in& {\rm Leg}_{\tilde{L}}(TN),  \qquad  (q_1,p_1)\in {\rm Leg}_{\tilde{L}}(TN). \nonumber
\end{eqnarray}

Substituting Equation~\eqref{Eq:EulerB1} for the tuples $(q_0,p_0;q_1,p_1)$ and $(q_1,p_1;q_2,p_2)$  in Equation~\eqref{Eq:EulerB2}
we obtain the next point $q_2$ in the discrete flow on $N$ with the following numerical method:
\begin{eqnarray*}
{\mathbf M}\frac{q_2-2q_1+q_0}{h^2}&=&-\nabla V(q_1)+\mu_{\alpha}\nabla  \phi^\alpha  (q_1)\, ,\\
\phi^\alpha(q_2)&=& 0\, ,
\end{eqnarray*}
where $\mu_{\alpha}=\tilde{\lambda}^{(0)}_{\alpha}+\lambda^{(1)}_{\alpha}$. The Lagrange multipliers with the superscript $(0)$ and $(1)$ are coming from the Equation~\eqref{Eq:EulerB2} for the points $(q_0,p_0;q_1,p_1)$ and $(q_1,p_1;q_2,p_2)$, respectively. That method is precisely the SHAKE discretization in~\citep{LeimSkeel96} and~\cite[Chapter 7.2]{LeRe} {which, being formulated as a $2$-step method on $N$, sidesteps the necessity of considering the restriction on the momenta. Nevertheless, in order to include a velocity or momentum level representation, the tangency condition needs to be reintroduced. There is, however, no unique way to move back to momentum level.}

The symplectic method in Equations~\eqref{Eq:EulerB1} and~\eqref{Eq:EulerB2} can be rewritten as follows:   
\begin{eqnarray}
p_0&=&{\mathbf M}\left(\frac{q_1-q_0}{h}\right)-h\lambda_\alpha \nabla \phi^\alpha (q_0) \, , \label{eq1} \\
p_1&=&{\mathbf M}\left(\frac{q_1-q_0}{h}\right)-h\nabla V(q_1)+h\tilde{\lambda}_\alpha \nabla \phi^{\alpha}  (q_1)\, , \label{eq2}\\
\phi^\alpha(q_0)&=&0\, , \qquad  \nabla\phi^{\alpha}(q_0){\mathbf M}^{-1}p_0=0\, , \label{eq:momentap0} \\
\phi^\alpha(q_1)&=&0\, ,\qquad\nabla\phi^{\alpha}(q_1){\mathbf M}^{-1}p_1=0\, . \label{eq:momentap1}
\end{eqnarray}
As the Lagrangian is regular, the discrete flow on ${\rm Leg}_{\tilde{L}}(TN)$ determines a discrete flow on $TN$ that satisfies both the constraint equation and the tangency condition.
{This is a modification of} the RATTLE algorithm in~\cite{LeimSkeel96} and~\cite[Chapter 7.2]{LeRe}. {It is in fact an order 1 version of it. If RATTLE is the order 2 version of SHAKE at momentum level, Equations~\eqref{eq1}-~\eqref{eq:momentap1} can be regarded as an order 1 extension and can be thought of as constraining the Hamiltonian map of the discrete Lagrangian \cite[cf. Section 3.5.4]{MW_Acta}
\begin{equation*}
L_d(q_0,q_1,h) = h L\left(q_1, \frac{q_1 - q_0}{h}\right).
\end{equation*}
}

{\bf Initial conditions discussion}.
Assume that we have the following initial conditions $(q_0, p_0)$ in $T^*\mathbb{R}^m$ such that 
$\phi^{\alpha} (q_0)=0$ and $\nabla\phi^{\alpha}(q_0){\mathbf M}^{-1}p_0=0$, that is, $(q_0, p_0)\in Leg_{\tilde L}(TN)$, equivalently $(q_0,{\mathbf M}^{-1}p_0)\in TN$.  {{Lagrange multipliers in Equations~\eqref{eq1} and~\eqref{eq2} are chosen so that the constraint and tangency conditions are both satisfied.}} By replºacing Equation~\eqref{eq1} in the right-hand side equation in~\eqref{eq:momentap0} coming from $(q_0, p_0)\in Leg_{\tilde L}(TN)$, 
we determine the Lagrange multiplier ${\lambda}_\beta$ as follows
\[
0=\nabla\phi^{\alpha}(q_0)\frac{q_1-q_0}{h}-h{\lambda}_\beta \nabla \phi^{\alpha} (q_0){\mathbf M}^{-1}\nabla \phi^{\beta}  (q_0)\, .
\]
If we denote by $({\mathcal C}_{\alpha\beta})$ the inverse matrix of 
\[
({\mathcal C}^{\alpha\beta})=(\nabla \phi^{\alpha} (q_0){\mathbf M}^{-1}\nabla \phi^{\beta}(q_0)),
\]
then 
$
{\lambda}_\beta= {\mathcal C}_{\alpha\beta} \nabla\phi^{\alpha}(q_0)\frac{q_1-q_0}{h^2}
$.
Therefore, Equation~\eqref{eq1} becomes
\[
p_0={\mathbf M}\frac{q_1-q_0}{h}-  \left({\mathcal C}_{\beta\alpha} \nabla\phi^{\beta}(q_0)\frac{q_1-q_0}{h}\right)\nabla\phi^{\alpha}  (q_0)\, .
\]
Using this equation and the holonomic constraint we find the value of $q_1$. 

Alternatively, Equation~\eqref{eq:pN_pQ} leads to
the $m$-implicit equations to solve for $q_1$:
\[
\langle p^N_0, X_a(q_0)\rangle_{T^*N}=\left\langle {\mathbf M}\frac{q_1-q_0}{h},{\rm T}i_N\left(X_a(q_0)\right)\right\rangle_{T^*Q},  \quad \phi^{\alpha}(q_1)=0\, .
\]

Finally, we determine the Lagrange multiplier $\tilde \lambda_{\beta}$ using the value of $p_1$ from~\eqref{eq2} in $(q_1,{\mathbf M}^{-1}p_1)\in TN$, that is, $\nabla\phi^{\alpha}(q_1){\mathbf M}^{-1}p_1=0$.

Thus, we implicitly determine a map $(q_0, p_0)\rightarrow (q_1, p_1)$ such that $(q_0, p_0), (q_1, p_1)\in Leg_{\tilde L}(TN)$ because for an initial condition in $Leg_{\tilde L}(TN)$, there exist Lagrange multipliers so that $(q_1, p_1)\in Leg_{\tilde L}(TN)$.

\subsubsection{Mid-point discretization}

Let us consider the discretization map 
\[
R_d (q, v)=\left(q-\frac{1}{2}v, q+\frac{1}{2}v\right) 
\]
that is associated with the mid-point rule because the inverse map is $$(R_d)^{-1}(q_0, q_1)=\left(\frac{q_0+q_1}{2}, q_1-q_0\right).$$

Therefore, the modified submanifold  ${\mathcal T}_h^{R_d}N=(R^h_d)^{-1}(N\times N)$ of $TQ$  is given by 
\[
\phi^\alpha_1(q, v)=\phi^{\alpha}\left(q-\frac{h}{2}v\right)=0 \hbox{  and }  {\phi}^\alpha_2(q, v)=\phi^{\alpha}\left(q+\frac{h}{2}v\right)=0 \, .
\]
Moreover, 
\begin{eqnarray*}
R_d^{T^*}(q, p, \dot{q}, \dot{p})&=&\left(q-\dfrac{1}{2}\,\dot{q},p-\dfrac{\dot{p}}{2}; q+\dfrac{1}{2}\,\dot{q},p+\dfrac{\dot{p}}{2}\right)  \,,  \\
\dfrac{1}{h}\, \left(R_d^{T^*}\right)^{-1}(q_0, p_0; q_1, p_1)
&=&\left(\frac{q_0+q_1}{2},\dfrac{p_0+p_1}{2} \,; \dfrac{q_1-q_0}{h}, \dfrac{p_1-p_0}{h}\right)  \,.\\ \,.\end{eqnarray*}

After some calculations Equations~\eqref{Eq:HMethod} and~\eqref{Eq:HMethod2} define by construction the following symplectic method:
\begin{eqnarray*}
\frac{p_0+p_1}{2}&=&{\mathbf M}\frac{q_1-q_0}{h}-\frac{h}{2}{\lambda}_\alpha \nabla \phi^{\alpha}  (q_0)+\frac{h}{2}\tilde{\lambda}_\alpha \nabla \phi^{\alpha}  (q_1)\, ,\\
\frac{p_1-p_0}{h}&=&-\nabla V\left(\frac{q_0+q_1}{2}\right)+{\lambda}_\alpha \nabla \phi^{\alpha}  (q_0)+\tilde{\lambda}_\alpha \nabla \phi^{\alpha}  (q_1)\, ,\\
(q_0,p_0)&\in& {\rm Leg}_{\tilde{L}}(TN),  \qquad  (q_1,p_1)\in {\rm Leg}_{\tilde{L}}(TN).
\end{eqnarray*}
Alternatively, we can write the equations as follows 
\begin{eqnarray*}
p_0&=&{\mathbf M}\left(\frac{q_1-q_0}{h}\right)+\frac{h}{2}\nabla V\left(\frac{q_0+q_1}{2}\right)-h\lambda_\alpha \nabla \phi^\alpha (q_0) \, ,\\
p_1&=&{\mathbf M}\left(\frac{q_1-q_0}{h}\right)-\frac{h}{2}\nabla V\left(\frac{q_0+q_1}{2}\right)+h\tilde{\lambda}_\alpha \nabla \phi^\alpha (q_1)\, ,\\
(q_0,p_0)&\in& {\rm Leg}_{\tilde{L}}(TN),  \qquad  (q_1,p_1)\in {\rm Leg}_{\tilde{L}}(TN).
\end{eqnarray*}

\subsection{Composition of symplectic Euler methods: RATTLE discretization}\label{Sec:Composition}
 The composition of different methods allows to derive higher-order methods \citep{hairer}. Such a composition can also be interpreted as the composition of Lagrangian submanifolds. 

Let $(P_i, \omega_i)$ be symplectic manifolds $i=1, 2, 3$. 
Denote by $P^-_i\times P_j$ the symplectic manifold with  the symplectic 2-form $\hbox{pr}_j^*\omega_j-\hbox{pr}_i^*\omega_i$ where $\hbox{pr}_i: P_i\times P_j\rightarrow P_i$ and $\hbox{pr}_j: P_i\times P_j\rightarrow P_j$  are the canonical projections. 

If $\Sigma_1\subset P^-_1\times P_2$ and $\Sigma_2\subset P^-_2\times P_3$ are Lagrangian submanifolds, then the composition
\[
\Sigma_2\circ \Sigma_1\subset P_1^-\times P_3
\]
is a Lagrangian submanifold (under clean intersection conditions, see \cite{13GuiStern}) where
\[
\Sigma_2\circ \Sigma_1=\{ (x_1, x_3)\in P_1\times P_3\; |\; \exists\;  x_2\in P_2 \mbox{  such that } (x_1, x_2)\in \Sigma_1, (x_2, x_3)\in \Sigma_2\}\, .
\]
Consider a holonomic system given by a Lagrangian $L: TN\rightarrow {\mathbb R}$ where $N$ is an embedded  submanifold of the ambient manifold $Q$. From a discretization map $R_d: TQ\rightarrow Q\times Q$ we obtain the Lagrangian submanifold ${\mathcal L}^h=R_d^{T^*}(h\, \tilde{S}_d)$ of $T^*Q\times T^*Q$.  For real numbers $\gamma_1, \ldots, \gamma_s$, we define the Lagrangian submanifold
\begin{equation}\label{comp}
{\mathcal L}^{\gamma_sh}\circ \ldots \circ {\mathcal L}^{\gamma_1h}
\end{equation}
that generates a symplectic composition method:
{\begin{equation*} 
(q_0, p_0; q_1, p_1)\; \in \; \left({\mathcal L}^{\gamma_sh}\circ \ldots \circ {\mathcal L}^{\gamma_1h}\right) \cap \left(Leg_{\tilde{L}}(TN) \times Leg_{\tilde{L}}(TN)\right)\, .
\end{equation*}  
}
Another non equivalent option is obtained by composing the Lagrangian submanifold 
${\mathcal L}_N^h=(i^*_N\times i^*_N)\left( R_d^{T^*}(h\, \tilde{S}_d)\right)\,$ of $(T^*N\times T^*N,{\rm pr}_2^*\omega_N- {\rm pr}_1^*\omega_N)$ as follows:
\begin{equation}\label{comp2}
{\mathcal L}_N^{\gamma_sh}\circ \ldots \circ {\mathcal L}_N^{\gamma_1h}.
\end{equation} The resulting symplectic integrator is different from the one in Equation~\eqref{comp} because the composition of Lagrangian submanifolds has taken place on $T^*N\times T^*N$, in contrast with the composition in Equation~\eqref{comp} that has taken place extrinsically on $T^*Q\times T^*Q$.

If we start from a method of even order $p$ and the coefficients $\gamma_1, \ldots, \gamma_s$ verify 
\begin{align*}
\gamma_1+ \ldots + \gamma_s = 1\, ,\\
\gamma^{p+1}_1+ \ldots + \gamma^{p+1}_s = 0,
\end{align*}
then the symplectic composition {methods~\eqref{comp} and~\eqref{comp2} are at least of order $p+1$ in $TN$~\citep[Sections II.4 and IV.4]{hairer}}

Alternatively, we can also compose using a discretization map $R_d$ and  the corresponding adjoint map $R_d^*$. If 
${\mathcal L}^h=R_d^{T^*}(h\, \tilde{S}_d)$ and we denote the Lagrangian submanifold generated by the adjoint map by 
$({\mathcal L}^h)^*= (R_d^*)^{T^*}(h\,\tilde{S}_d)$, then the operation
\begin{equation}\label{comp-3}
\left({\mathcal L}^{\beta_sh}\right)^*\circ {\mathcal L}^{\alpha_sh}\circ \ldots \circ \left({\mathcal L}^{\beta_1h}\right)^*\circ {\mathcal L}^{\alpha_1h}
\end{equation}
generates a symplectic composition method for the constrained symplectic method (see~\cite{hairer} for conditions on the coefficients $\alpha_l$ and $\beta_l$, $1\leq l\leq s$).

As in~\eqref{comp2} we obtain a different class of symplectic integrators considering 
\begin{equation}\label{comp-4}
\left({\mathcal L}_N^{\beta_sh}\right)^*\circ {\mathcal L}_N^{\alpha_sh}\circ \ldots \circ \left({\mathcal L}_N^{\beta_1h}\right)^*\circ {\mathcal L}_N^{\alpha_1h}\, .
\end{equation}

\begin{example}\label{compos-example}
For instance, composing the  methods defined in Sections~\ref{section:euler-a} and~\ref{section:euler-b} we obtain the following  second order method (RATTLE discretization) using Equation~\eqref{comp-3} where ${\mathcal L}^{\alpha_1h}$ corresponds with the symplectic Euler method A, $({\mathcal L}^{\beta_1h})^*$ with the symplectic Euler method B for $\alpha_1=\beta_1=1/2$: 
{
\begin{eqnarray}
p_0&=&{\mathbf M}\frac{q_{1/2}-q_0}{h/2}+\frac{h}{2}\nabla V(q_0)-\frac{h}{2}{\lambda}_\alpha \nabla \phi^{\alpha}  (q_0)\, ,\label{eq:Composition1}\\
{p_{1/2}}&{=}&{{\mathbf M}\frac{q_{1/2}-q_0}{h/2}+\dfrac{h}{2}\tilde{\lambda}_{\alpha}\nabla \phi^{\alpha}  (q_{1/2})}\label{eq:Composition_cond1}\\
&{=}&{{\mathbf M}\frac{q_{1}-q_{1/2}}{h/2}-\dfrac{h}{2}{\Lambda}_{\alpha}\nabla \phi^{\alpha}  (q_{1/2})\,,}\label{eq:Composition_cond2}\\
p_1&=&{\mathbf M}\frac{q_1-q_{1/2}}{h/2}-\frac{h}{2}\nabla V(q_1)+\frac{h}{2}\tilde{\Lambda}_\alpha\nabla  \phi^\alpha  (q_1) \, , \label{eq:Composition2}\\
(q_0,p_0)&\in& {\rm Leg}_{\tilde{L}}(TN),  \qquad  (q_1,p_1)\in {\rm Leg}_{\tilde{L}}(TN)\, . \label{eq:Composition3}
\end{eqnarray}
Equations~\eqref{eq:Composition_cond1} and~\eqref{eq:Composition_cond2} conform the composability condition of the two Lagrangian submanifolds. This composability may be achieved in $T^*N$ requiring that
\begin{equation*}
(q_{1/2},p_{1/2}) \in {\rm Leg}_{\tilde{L}}(TN),
\end{equation*}
which corresponds to a composition of the type of \eqref{comp2}. However, we may simply require composability extrinsically, i.e. only in $T^*Q$, as in \eqref{comp}. For instance, we have that ${\Lambda}_{\alpha} = - \tilde{\lambda}_{\alpha}$, which results in $q_{1/2} = \frac{q_1 + q_0}{2}$. Thus, disregarding $p_{1/2}$ entirely, we obtain
\begin{eqnarray}
p_0&=&{\mathbf M}\frac{q_1-q_0}{h}+\frac{h}{2}\nabla V(q_0)-\frac{h}{2}{\lambda}_\alpha \nabla \phi^{\alpha}  (q_0)\, ,\label{eq:RattleCompose1}\\
p_1&=&{\mathbf M}\frac{q_1-q_{0}}{h}-\frac{h}{2}\nabla V(q_1)+\frac{h}{2}\bar{\lambda}_\alpha\nabla  \phi^\alpha  (q_1) \, , \label{eq:RattleCompose2}\\
(q_0,p_0)&\in& {\rm Leg}_{\tilde{L}}(TN),  \qquad  (q_1,p_1)\in {\rm Leg}_{\tilde{L}}(TN)\, . \label{eq:RattleCompose3}
\end{eqnarray}
These equations conform the well-known order 2 RATTLE method, \cite{RattleAndersen}, which can be understood as an application of a constrained Lobatto IIIAB method \citep{Jay1996,Jay2015}}.
\end{example}

\begin{remark}
The previous examples are well-known in the literature but we are giving a new point of view that can be easily generalized to more involved situations as we will show in the next example and generalize in the next section.  

On the other hand, these methods are closely related with the  projection technique that allows us to apply symplectic integrators, suitable for the integration of unconstrained systems, to the integration of constrained
systems as well \citep{LeimSkeel96,Reich1}.
\end{remark}

\begin{example}{\bf Rigid body as a constrained system.}\label{Ex:RigidBody}
Consider a Lagrangian $L: T(SO(3)\times {\mathbb R}^3)\rightarrow {\mathbb R}$ given by
\[
L(R, x, \dot{R}, \dot{x})=\frac{1}{2}\hbox{tr} (\dot{R}{J}\dot{R}^T)+\frac{1}{2}m||\dot{x}||^2-V(R, x)\, ,
\]
where $J$ is the symmetric {second moments of} mass tensor of the body, {related to the standard moment of inertia tensor of the body $I$ by $J = \frac{1}{2}\mathrm{tr}(I)\, \mathrm{Id}_3 - I$}. 
Extrinsically, this holonomic system can be considered on $T({\mathbb R}_{3\times 3}\times {\mathbb R}^3)$ introducing the orthogonality constraints
$R^T R={\rm Id}_3$: 
\[
\phi_i(R)=\frac{1}{2}\hbox{tr}\left[ \left((R^TR)-{\rm Id}_3\right) \Lambda_i\right]\, ,
\]
where $\Lambda_i$, $1\leq i\leq 6$, is a basis of symmetric matrices.

Using the composition method in Equations~\eqref{eq:RattleCompose1},~\eqref{eq:RattleCompose2} and~\eqref{eq:RattleCompose3} we derive the discrete equations: 
\begin{eqnarray*}
P_0&=&\frac{R_{1}-R_0}{h}{J}+\frac{h}{2}\nabla_1 V(R_0, x_0)-\frac{h}{2}R_0{\Lambda} \, ,\\
p_0&=& m\frac{x_{1}-x_0}{h}+\frac{h}{2}\nabla_2 V(R_0, x_0)\, ,\\
P_1&=&\frac{R_1-R_{0}}{h}{J}-\frac{h}{2}\nabla_1 V(R_1, x_1)+\frac{h}{2}R_1\tilde{\Lambda} \, ,\\
p_1&=& m\frac{x_{1}-x_0}{h}-\frac{h}{2}\nabla_2 V(R_1, x_1)\, ,\\
R_0^TR_0&=&{\rm Id}_3\, ,   \qquad 
R_1^T R_1={\rm Id}_3\, ,
\end{eqnarray*}
where $P_i$ and $p_i$ are the corresponding momenta for $R_i$ and $x_i$, respectively, for $i=0,1$; $\Lambda$, $\tilde{\Lambda}$ are $(3\times 3)$-symmetric matrices acting as Lagrange multipliers \citep[Chapter 8.1]{LeRe}, 
$\nabla_iV$ is the gradient with respect to the $i$-th family of coordinates.

The RATTLE method is obtained by adding the constraint conditions $$(R_i,x_i; P_i,p_i)\in Leg_L (T(SO(3)\times {\mathbb R}^3))$$ for $i=0, 1$. The Legendre transformation ${\mathbb F} L\colon T({\mathbb R}_{3\times 3}\times {\mathbb R}^3)\rightarrow T^*({\mathbb R}_{3\times 3}\times {\mathbb R}^3)$ is given by 
{
\begin{equation*}
{\mathbb F} L(R,x;\dot{R},\dot{x}) = \left(\dot{R} J, m \dot{x}\right)\,.
\end{equation*}
To introduce the tangency condition,
\begin{equation*}
\mathrm{d}_T \phi(R,\dot{R}) = \dot{R} R^T + R \dot{R}^T = 0,
\end{equation*}
implies that $\dot{R}R^T \in \mathfrak{so}(3)$, where the Lie algebra $\mathfrak{so}(3)$ is the set of skew symmetric matrices. Thus, we need to impose that
\begin{eqnarray*}
 {J}^{-1} P_0^T R_0 + R_0^T P_0 {J}^{-1}&=&0\, , \\  
 {J}^{-1} P_1^T R_1 + R_1^T P_1 {J}^{-1}&=&0 \, .  
\end{eqnarray*}
}
Now taking a basis of the Lie algebra $\mathfrak{so}(3)\simeq T_{\rm e} SO(3)$, that is, the vector space of the skew symmetric matrices: 
\[
e_1=\begin{pmatrix}
0&0&0\\
0&0&-1\\
0&1&0
\end{pmatrix}, \quad 
e_2=\begin{pmatrix}
0&0&1\\
0&0&0\\
-1&0&0
\end{pmatrix},\quad 
e_3=\begin{pmatrix}
0&-1&0\\
1&0&0\\
0&0&0
\end{pmatrix},
\]
Equation~\eqref{eq:pN_pQ} leads to the alternative expression without Lagrange multipliers for the numerical method by definition of the tangent space:
\begin{eqnarray*}
P_0\cdot R_0e_i&=&\frac{R_{1}-R_0}{h}{\mathbf J}\cdot R_0e_i+\frac{h}{2}\nabla_1 V(R_0, x_0)\cdot R_0e_i\, ,\\
p_0&=& m\frac{x_{1}-x_0}{h}+\frac{h}{2}\nabla_2 V(R_0, x_0)\, ,\\
P_1\cdot R_1e_i&=&\frac{R_1-R_{0}}{h}{\mathbf J}\cdot R_1e_i-\frac{h}{2}\nabla_1 V(R_1, x_1)\cdot R_1e_i \, ,\\
p_1&=& m\frac{x_{1}-x_0}{h}-\frac{h}{2}\nabla_2 V(R_1, x_1)\, ,\\
R_0^TR_0&=&{\rm Id}_3\, ,   \qquad 
R_1^T R_1={\rm Id}_3\, .
\end{eqnarray*}
where $A\cdot B=\frac{1}{2}\hbox{tr}(AB^T)$. 

This last example motivates to devote a section to the construction of geometric integrators of constrained systems on a Lie group. 

\end{example}

\section{Symplectic integration of constrained systems on a Lie group}\label{section:Lie}

Lie groups are the configuration manifolds of many  mechanical systems in the field of engineering and robotics. Following Example~\ref{Ex:RigidBody}, we first motivate the need of obtaining sympletic integrators on a Lie group used for multibody mechanical systems, then the symplectic integrators for constrained systems are defined on any Lie group.

\subsection{Motivation: Multibody mechanical systems with holonomic constraints}\label{Sec:multibody}

 For a multibody mechanical system, the total configuration space is typically given by a product of the same Lie group $G$ so that all possible configurations of $s$ rigid bodies can be described, that is, $Q=G^s=\underbrace{G \times G \times \dots \times G}_{s }$. Some typical examples of Lie groups $G$ used in robot manipulators are $SO(2)$, $SE(2)$, $SO(3)$ or $SE(3)$. To describe robot systems is necessary to introduce holonomic constraints representing, for instance, prismatic, revolute joints, etc. Those holonomic constraints naturally restrict  positions and orientations of the rigid bodies to a submanifold $N$ of $Q=G^s$. The process described in Section~\ref{Sec:SymplHolon} is also useful to derive numerical integrators preserving symplecticity according to the steps here delineated for a fixed-step size $h$:

 \begin{enumerate}
    \item Choose an arbitrary discretization map  on the Lie group $G$: $R_d:TG\rightarrow G\times G$. For instance, $R_d(v_g)=(g, g\, \hbox{exp}(g^{-1}v_g))$, where  $\hbox{exp}\colon \mathfrak{g}\simeq T_{\rm Id}G\rightarrow G$ is the exponential map on the Lie group.
\item Consider the natural extension of $R_d$ to the multibody configuration manifold: ${R}^s_d: TG^s\rightarrow G^s\times G^s$.
\item Use the holonomic constraints and the discretization map to define the submanifold ${\mathcal T}_h^{R^s_d}N$ of $TG^s$.

\item Let $L=K-V: TG^s\rightarrow {\mathbb R}$ be a mechanical Lagrangian, construct the Lagrangian submanifold
$\tilde{\Sigma}^d_L$ of the cotangent bundle $ T^*TG^s$
\[
\tilde{\Sigma}^d_L=\left\{\mu\in T^*T G^s\; | \mu-dL\in \left(T\left({\mathcal T}_h^{{R}^s_d}N\right)\right)^0\right\}\,
\] that intrinsically defines the equations of motion for the constrained system, analogous to Equation~\eqref{eq:motionConstrained}.

\item To make computations easier it is possible to define a discretization map on the cotangent bundle given by the cotangent lift $$\Phi^{-1}\circ \widehat{{R}^s_d}: T^*TG^s\rightarrow T^*G^s\times T^*G^s\, ,$$ as appears in Diagram~\eqref{Diagram:cotangentlift} and in Equation~\eqref{new-formula}. This alternative construction for the discretization map will be used again in Section~\ref{section-left}.
\item Finally, define the Lagrangian submanifold {$$(i^*_{N}\times i^*_{N})\left( \left(\Phi^{-1}\circ \widehat{{R}^s_d}\right)(h\tilde{\Sigma}^d_L)\right) $$} of the symplectic manifold $(T^*N\times T^*N, \hbox{pr}_2^*(\omega_N)-\hbox{pr}_1^*(\omega_N))$, where $i_N: N\hookrightarrow G^s$ is the inclusion map.  

\noindent {\bf Caution:} Here, the product  by $h$ in  $h\tilde{\Sigma}^d_L$ is understood with respect to the vector bundle structure $\zeta_{Q}: T^*TQ\rightarrow T^*Q$ (see \cite{grabowska}). Locally,
\[
\zeta_Q(q, \dot{q}, p_q, p_{\dot{q}})=(q, p_{\dot{q}})\; .
\]
Therefore
\begin{equation}\label{double-vector}
h\tilde{\Sigma}^d_L=\{
(q, h\dot{q}, hp_q, p_{\dot{q}})\in T^*TQ\; \; | \;
(q, \dot{q}, p_q, p_{\dot{q}})\in\tilde{\Sigma}^d_L\} 
\end{equation}

\end{enumerate}
Now, the symplectic integrator for the holonomic multibody system is given by taking an initial condition $\mu_{k}$ in $T^*N$ and the next step is to find the unique $\mu_{k+1}\in T^*N$ such that {$$(\mu_k, \mu_{k+1})\in (i^*_{N}\times i^*_{N})\left( \left(\Phi^{-1}\circ \widehat{{R}^s_d}\right)(h\tilde{\Sigma}^d_L)\right) \; .$$} By construction the method is symplectic and preserves exactly the holonomic constraints $N$ on $G^s$. 

{Alternatively, we can follow the extrinsic point of view and consider the implicit symplectic method
\begin{equation*} 
(g^s_0, p^s_0; g^s_1, p^s_1)\; \in \;  \left( \left(\Phi^{-1}\circ \widehat{{R}^s_d}\right)(h\tilde{\Sigma}^d_L)\right)\cap \left(Leg_{{L}}(TG^s) \times Leg_{{L}}(TG^s)\right)\, . 
\end{equation*} 
In Section~\ref{section-left} the left-trivialization is used to give a simplified expression of the previous symplectic methods for a Lie group.
}

\subsection{Constrained symplectic integrators via trivialization}\label{section-left}

Let $G$ be a Lie group with the corresponding Lie algebra ${\mathfrak g}$. Lagrangian mechanics and Hamiltonian mechanics are traditionally expressed on $G\times {\mathfrak g}\equiv TG$ and $ G\times {\mathfrak g}^*\equiv T^*G$, respectively, where we are using left-trivialization (see Appendix~\ref{Sec:Liegroup} for more details on definitions and notations used here).

We first apply the discretization procedure described in~\citep{iser-munt,bou-rabee} to differential equations without constraints on Lie groups by introducing a retraction map 
  $\tau: {\mathfrak g}\to G$ that induces a discretization map $R_d: TG\equiv G\times {\mathfrak g}\rightarrow G\times G$ on $G$ as defined in Section~\ref{Sec:Rd}. For simplicity,  the following discretization map defining the symplectic Euler method-A in Section~\ref{section:euler-a} is used from now on
\[
R_d(g, \xi)=(g, g\, \tau(\xi)),
\]
whose inverse map is $(R_d)^{-1}(g_0, g_1)=\left(g_0, \tau^{-1}\left(g_0^{-1} g_1\right)\right)$.

In the unconstrained case for mechanical systems, 
 the following identifications induced by the left-translation
\begin{eqnarray*}
TTG&\equiv& T(G\times {\mathfrak g})\equiv TG\times T{\mathfrak g}\equiv G\times {\mathfrak g}\times {\mathfrak g}\times {\mathfrak g}\,\\
 T(G\times G) &\equiv& G\times G \times {\mathfrak g}\times  {\mathfrak g}
 \end{eqnarray*}
 allow dualization to define discretization maps on $T^*G\equiv G\times {\mathfrak g}^*$: 
\begin{equation}\label{eq:cotLiftLiegroup}
\widehat{R_d}: 
T^*(G\times {\mathfrak g})\rightarrow 
(G\times {\mathfrak g}^*)\times (G\times {\mathfrak g}^*)\,.
\end{equation}
Note that we are using the same notation as in Equation~\eqref{eq:cotLiftdiscrete} in order not to introduce more cumbersome notation. However, in the current section the domain of the discretization map is a suitable cotangent bundle as the one in Equation~\eqref{eq:cotLiftLiegroup}. 

After some computations, using Equations~\eqref{eq:symplecticPhi} and the trivilizations of the tangent maps in~\eqref{eq:lefttrivialin},~\eqref{eq:righttrivialinv}, it is obtained that
\[
\widehat{R_d} (g, \xi; p_g, p_{\xi})=
\left(g, (d^R\tau^{-1}_{\xi})^*p_{\xi}- {\mathcal L}_{g}^*p_g; g\tau(\xi), (d^L\tau^{-1}_{\xi})^*p_{\xi}\right),
\]
where ${\mathcal L}^*$ is the shorthand notation in Equation~\eqref{eq:AppShortTL*} for the pull-back of the tangent map of the left multiplication.

For a  trivialized  Lagrangian function $L: G\times {\mathfrak g}\rightarrow  {\mathbb R}$ the equations of motion are obtained from Equations~\eqref{new-formula} and~\eqref{double-vector} as:
\begin{equation}(g_0,\alpha_0; g_1, \alpha_1)\in \widehat{R_d}\left( h\, \Sigma^d_{\tilde{L}} \right)\subset (G\times {\mathfrak g}^*)\times (G\times {\mathfrak g}^*), \end{equation}

where the multiplication by $h$ is on the fibers of the tangent bundle of $G\times \mathfrak{g}^*$ and
\[
h\, \Sigma^d_{\tilde{L}}=\left\{(g, h\xi, h\frac{\partial L}{\partial g}(g, \xi), \frac{\partial L}{\partial \xi}(g, \xi)\right\}.
\]

Equivalently, 
\begin{eqnarray*}
g_1&=&g_0 \tau (h\xi)\, ,\\
(\mbox{d}^L\tau_{h\xi})^*(\alpha_1)&=&\frac{\delta L}{\delta \xi}(g_0, \xi)\; ,\\
\alpha_0-\hbox{Ad}^*_{\tau(-h\xi)}\alpha_1&=&-h{\mathcal L}_{g_0}^*\left(\frac{\delta L}{\delta g}(g_0, \xi)\right)\; ,
\end{eqnarray*}
or, alternatively,
\begin{eqnarray*}
g_1&=&g_0 \tau (h\xi)\, ,\\
(\mbox{d}^L\tau_{h\xi})^*(\alpha_1)&=&\frac{\delta L}{\delta \xi}(g_0, \xi)\; ,\\
\alpha_0&=&
(\mbox{d}^R\tau^{-1}_{h\xi})^*\frac{\delta L}{\delta \xi}(g_0, \xi)
-h{\mathcal L}_{g_0}^*\left(\frac{\delta L}{\delta g}(g_0, \xi)\right)\; ,
\end{eqnarray*}
because $\mathrm{Ad}_{\tau(h\xi)} \mathrm{d}^L \tau_{h\xi} = \mathrm{d}^R \tau_{h\xi}$ and, in consequence, ${\rm Ad}^*_{\tau(-h\xi)}=({\rm d}^R\tau^{-1}_{h\xi})^*\circ (\mbox{d}^L\tau_{h\xi})^*$. 

In the holonomic case, we additionally have a constrained submanifold $N\subset G$ which is not in general a Lie subgroup and a regular Lagrangian $L: G\times {\mathfrak g}\rightarrow {\mathbb R}$. As in Section~\ref{Sec:SymplHolon}, we introduce the {\bf modified constrained variational problem}
$(L, {\mathcal T}_h^{R_d}N)$, where
${\mathcal T}_h^{R_d}N=(R_d^h)^{-1}(N\times N)\subset TG\equiv G\times {\mathfrak g}$. If $N$ is defined by constraint functions $\phi^{\alpha}(g)=0$, then $(g,\xi)$ lies in ${\mathcal T}_h^{R_d}N$ if and only if 
\begin{equation*}
\phi^{\alpha}(g)=0\, ,\qquad \phi^{\alpha}(g\tau(h\xi))=0\; ,
\end{equation*}
where $\tau: {\mathfrak g}\rightarrow G$ is a retraction map.

Consequently, for $g_0=g$ the discrete constrained equations~\eqref{Eq:HMethod} and~\eqref{Eq:HMethod2} become the following ones on Lie groups (see Appendix \ref{Sec:Liegroup} for technical details): 
\begin{eqnarray*}
g_1&=&g_0 \tau (h\xi)\, ,\\
(\mbox{d}^L\tau_{h\xi})^*(\alpha_1)&=&\frac{\delta L}{\delta \xi}(g_0, \xi)+h\tilde{\lambda}_{\alpha}((T_{h\xi}\tau)^*\circ {\mathcal L}^*_{g_0})\left(\frac{\delta \phi^{\alpha}}{\delta g}(g_1)\right)\; ,\\
\alpha_0-\hbox{Ad}^*_{\tau(-h\xi)}\alpha_1&=&-\, h{\mathcal L}_{g_0}^*\left(\frac{\delta L}{\delta g}(g_0, \xi)\right)-h{\lambda}_{\alpha} {\mathcal L}_{g_0}^*\left(\frac{\delta \phi^{\alpha}}{\delta g}(g_0)\right)\, \\ &&-h\tilde{\lambda}_{\alpha} ({\mathcal L}_{g_0}\circ {\mathcal R}_{\tau(h\xi)})^*\left(\frac{\delta \phi^{\alpha}}{\delta g}(g_1)\right)
 \; ,\\
 (g_0, \alpha_0), (g_1, \alpha_1)&\in& Leg_L(TN)\, ,
\end{eqnarray*}
where $TN$ is understood as a submanifold of $N\times {\mathfrak g}$ and $Leg_L: G\times {\mathfrak g}\rightarrow G\times {\mathfrak g}^*$ is a diffeomorphism. 
An alternative expression of the symplectic Euler method-A integrator on Lie groups is: 
\begin{eqnarray*}
g_1&=&g_0 \tau (h\xi)\, ,\\
(\mbox{d}^L\tau_{h\xi})^*(\alpha_1)&=&\frac{\delta L}{\delta \xi}(g_0, \xi)+h\tilde{\lambda}_{\alpha}((T_{h\xi}\tau)^*\circ {\mathcal L}^*_{g_0})\left(\frac{\delta \phi^{\alpha}}{\delta g}(g_1)\right)\; ,\\
\alpha_0&=&
(\mbox{d}^R\tau^{-1}_{h\xi})^*\frac{\delta L}{\delta \xi}(g_0, \xi)
-h{\mathcal L}_{g_0}^*\left(\frac{\delta L}{\delta g}(g_0, \xi)\right)-h{\lambda}_{\alpha} {\mathcal L}_{g_0}^*\left(\frac{\delta \phi^{\alpha}}{\delta g}(g_0)\right)\; .
\end{eqnarray*}

{If we apply the following discretization map
\[
R_d(g, \xi)=(g\tau(-\xi), g)
\] coming from the symplectic Euler method-B in Section~\ref{section:euler-b}, the cotangent lift of the discretization map is given by:
\[
\widehat{R_d} (g, \xi; p_g, p_{\xi})=
\left(g\tau(-\xi), (d^L\tau^{-1}_{-\xi})^*p_{\xi}; g, (d^R\tau^{-1}_{-\xi})^*p_{\xi}+ {\mathcal L}_{g_1}^*p_g\right).
\]
}
The constraint functions defining ${\mathcal T}_h^{R_d}N$ are \begin{equation*}
\phi^{\alpha}(g\tau(-h\xi))=0\, ,\qquad \phi^{\alpha}(g)=0\; .
\end{equation*}
As a result, the following method for the constrained system is obtained:
\begin{eqnarray*}
g_1&=&g_0 \tau (h\xi)\, ,\\
(\mbox{d}^L\tau_{-h\xi})^*(\alpha_0)&=&\frac{\delta L}{\delta \xi}(g_1, \xi)-h{\lambda}_{\alpha}((T_{-h\xi}\tau)^*\circ {\mathcal L}^*_{g_1})\left(\frac{\delta \phi^{\alpha}}{\delta g}(g_0)\right)\; ,\\
\alpha_1-\hbox{Ad}^*_{\tau(h\xi)}\alpha_0&=&h{\mathcal L}_{g_1}^*\left(\frac{\delta L}{\delta g}(g_1, \xi)\right)+h{\lambda}_{\alpha} ({\mathcal R}^*_{\tau(-h\xi)}\circ {\mathcal L}^*_{g_1})\left(\frac{\delta \phi^{\alpha}}{\delta g}(g_0)\right)\, ,\\ &&+h\tilde{\lambda}_{\alpha} {\mathcal L}^*_{g_1}\left(\frac{\delta \phi^{\alpha}}{\delta g}(g_1)\right)
 \; ,  \\
 (g_0, \alpha_0), \; (g_1, \alpha_1)&\in& Leg_L(TN)\, ,
\end{eqnarray*}
or, equivalently, 
\begin{eqnarray*}
g_1&=&g_0 \tau (h\xi)\\
(\mbox{d}^L\tau_{-h\xi})^*(\alpha_0)&=&\frac{\delta L}{\delta \xi}(g_1, \xi)-h\lambda_{\alpha}((T_{-h\xi}\tau)^*\circ {\mathcal L}^*_{g_1})\left(\frac{\delta \phi^{\alpha}}{\delta g}(g_0)\right)\; ,\\
\alpha_1&=&
(\mbox{d}^R\tau^{-1}_{-h\xi})^*\frac{\delta L}{\delta \xi}(g_1, \xi)
+h{\mathcal L}_{g_1}^*\left(\frac{\delta L}{\delta g}(g_1, \xi)\right)+h{\tilde{\lambda}}_{\alpha} {\mathcal L}_{g_1}^*\left(\frac{\delta \phi^{\alpha}}{\delta g}(g_1)\right)\; .
\end{eqnarray*}

The composition of both methods gives us the following 2-stage Lobato IIIAB pair, a second order method,
\begin{eqnarray}
g_{1/2}=g_0 \tau \left(\frac{h}{2}\xi\right),\qquad  g_1= g_{1/2}\tau \left(\frac{h}{2}\bar{\xi}\right),\quad && \nonumber \\
\alpha_0=
(\mbox{d}^R\tau^{-1}_{(h/2)\xi})^*\frac{\delta L}{\delta \xi}(g_0, \xi)
-\frac{h}{2}{\mathcal L}_{g_0}^*\left(\frac{\delta L}{\delta g}(g_0, \xi)\right)-\frac{h}{2}{\lambda}_{\alpha} {\mathcal L}_{g_0}^*\left(\frac{\delta \phi^{\alpha}}{\delta g}(g_0)\right)\; .&\label{first-equation}&\\
\alpha_1=
(\mbox{d}^R\tau^{-1}_{-(h/2)\bar{\xi}})^*\frac{\delta L}{\delta \xi}(g_1, \bar{\xi})
+\frac{h}{2}{\mathcal L}_{g_1}^*\left(\frac{\delta L}{\delta g}(g_1, \bar{\xi})\right)+\frac{h}{2}{\tilde{\Lambda}}_{\alpha} {\mathcal L}_{g_1}^*\left(\frac{\delta \phi^{\alpha}}{\delta g}(g_1)\right)\; ,\label{forth-equation} &&\\
(\mbox{d}^L\tau_{(h/2)\xi})^*(\alpha_{1/2})=\frac{\delta L}{\delta \xi}(g_0, \xi)+\frac{h}{2}\tilde{\lambda}_{\alpha}((T_{(h/2)\xi}\tau)^*\circ {\mathcal L}^*_{g_0})\left(\frac{\delta \phi^{\alpha}}{\delta g}(g_{1/2})\right)\; ,&&\label{second-equation}\\
(\mbox{d}^L\tau_{-(h/2)\bar{\xi}})^*(\alpha_{1/2})=\frac{\delta L}{\delta \xi}(g_1, \bar{\xi})-\frac{h}{2}\Lambda_{\alpha}((T_{-(h/2)\bar{\xi}}\tau)^*\circ {\mathcal L}^*_{g_1})\left(\frac{\delta \phi^{\alpha}}{\delta g}(g_{1/2})\right)\; ,&&\label{third-equation}\\
(g_0, \alpha_0), (g_{1/2}, \alpha_{1/2}), (g_1, \alpha_1)\in Leg_L(TN)\ .&& \nonumber
\end{eqnarray}
Given initial conditions $(g_0, \alpha_0)\in Leg_L(TN)\subseteq G\times {\mathfrak g}^*$ we obtain the value $\xi\in {\mathfrak g}$ from Equation (\ref{first-equation}). Now, using Equations (\ref{second-equation}), (\ref{third-equation}) and $(g_{1/2}, \alpha_{1/2})\in Leg_L(TN)$ we obtain the value $\bar{\xi}$. Finally, from Equation (\ref{forth-equation}) and $(g_1, \alpha_1)\in Leg_L(TN)$ we obtain the value $\alpha_1$.

Alternatively, if we first compose the Lagrangian submanifolds and we impose the conditions on the initial and final points that they belong to $Leg_L(TN)$ we obtain an extension of the RATTLE method for Lie groups:
\begin{eqnarray}
g_{1}=g_0 \tau \left(\frac{h}{2}\xi\right)\tau \left(\frac{h}{2}\bar{\xi}\right)\quad , && \nonumber \\
\alpha_0=
(\mbox{d}^R\tau^{-1}_{(h/2)\xi})^*\frac{\delta L}{\delta \xi}(g_0, \xi)
-\frac{h}{2}{\mathcal L}_{g_0}^*\left(\frac{\delta L}{\delta g}(g_0, \xi)\right)-\frac{h}{2}{\lambda}_{\alpha} {\mathcal L}_{g_0}^*\left(\frac{\delta \phi^{\alpha}}{\delta g}(g_0)\right)\; ,&\label{first-equationAlt}&\\
\alpha_1=
(\mbox{d}^R\tau^{-1}_{-(h/2)\bar{\xi}})^*\frac{\delta L}{\delta \xi}(g_1, \bar{\xi})
+\frac{h}{2}{\mathcal L}_{g_1}^*\left(\frac{\delta L}{\delta g}(g_1, \bar{\xi})\right)+\frac{h}{2}{\tilde{\Lambda}}_{\alpha} {\mathcal L}_{g_1}^*\left(\frac{\delta \phi^{\alpha}}{\delta g}(g_1)\right)\; ,\label{forth-equationAlt} &&\\
(\mbox{d}^L\tau^{-1}_{(h/2)\xi})^*\left(\frac{\delta L}{\delta \xi}(g_0, {\xi})\right)=
(\mbox{d}^L\tau^{-1}_{{-}(h/2)\bar{\xi}})^*\left(\frac{\delta L}{\delta \xi}(g_1, \bar{\xi})\right) \label{eq:p0RattleLie}  \\
(g_0, \alpha_0), (g_1, \alpha_1)\in Leg_L(TN)\ .&& \nonumber
\end{eqnarray}

\begin{remark}
    The previous method is different to the method RATTLie proposed in ~\citep{HanteArnoldRattleLie} where the authors use discrete variational calculus for constrained systems on Lie groups. The reason of the non-equivalence is that we are working in general Lie groups and not in standard euclidean spaces. 
\end{remark}

\section{Conclusions and future work }\label{S:future}

In this paper, we have developed an alternative  geometric procedure to construct symplectic methods for constrained mechanical systems using the notion of discretization map introduced in an earlier paper~\citep{21MBLDMdD}. For the particular case of holonomic constraints, the continuous dynamic is used to obtain the geometric integrator because the discretization map focuses on discretizing the tangent bundle of the configuration manifold. As a result, the constraint submanifold is exactly preserved by the discrete flow and the extension of the methods to the case of non-linear configuration spaces is straightforward.

In future work that construction to derive symplectic integrators will be applied to specific and concrete holonomic systems defined on differentiable manifolds, as for instance, multibody system dynamics subjected to holonomic constraints. 

A further extension will consider mechanical systems with nonholonomic constraints and characterize the notion of the discrete null space method in that case.

\section*{Acknowledgements}            
The authors  acknowledge financial support from the Spanish Ministry of Science and Innovation, under grants  PID2022-137909NB-C21, RED2022-134301-TD and   BBVA Foundation via the project “Mathematical optimization for a more efficient, safer and decarbonized maritime transport”.

\appendix

\section{Appendix: Lagrangian and Hamiltonian  systems on Lie groups}  \label{Sec:Liegroup}
Let $G$ be a finite dimensional Lie group. 
The left multiplication ${\mathcal L}_g$ allows us to trivialize the tangent bundle $TG$ and the cotangent bundle $T^*G$ as follows
\begin{eqnarray*}
TG&\to&G\times {\mathfrak g}\, ,\qquad (g, \dot{g})\longmapsto (g, g^{-1}\dot{g})=(g, T_g{\mathcal L}_{g^{-1}}\dot g)=\colon (g, \xi)\; ,\\
T^*G&\to&G\times {\mathfrak g}^*,\qquad  (g, \mu_g)\longmapsto (g, T^*_e {\mathcal L}_g(\mu_g))=\colon (g, \alpha)\; ,
\end{eqnarray*}
where ${\mathfrak g}=T_eG$ is the Lie algebra of $G$ and $e$ is the identity element of $G$. More details can be found in~\citep[Chapter IV.4]{LiMarle}, \citep{Holm-Schmah-Stoica}. We will also use the shorthand notation 
\begin{equation}\label{eq:AppShortTL*}
{\mathcal L}_g^*(\mu_{g'})=(T_{g^{-1}g'}{\mathcal L}_g)^*(\mu_{g'})\in T^*_{g^{-1}g'}G 
\end{equation}
for all $g, g'\in G$.
Having in mind the above identifications by the left trivialization, it is easy to show that the two canonical structures of the cotangent bundle $T^*G$, the Liouville 1-form $\theta_G$ and the canonical symplectic 2-form $\omega_G$, can now be understood as objects on $G\times \mathfrak{g}^*$:
\begin{eqnarray}
(\theta_G)_{(g, \alpha)}(\xi_1, \nu_1)&=&\langle \alpha, \xi_1\rangle\; ,\\
(\omega_G)_{(g, \alpha)}\left( (\xi_1, \nu_1), (\xi_2, \nu_2)\right)&=&-\langle \nu_1, \xi_2\rangle + \langle \nu_2, \xi_1\rangle+\langle\alpha, [\xi_1, \xi_2]\rangle\; ,\label{omega}
\end{eqnarray}
where $(g, \alpha)\in G\times {\mathfrak g}^*$, $\xi_i\in {\mathfrak g}$ and $\nu_i\in {\mathfrak g}^*$, $i=1, 2$. Observe that the elements of $T_{\alpha_g}T^*G\simeq T_{(g,\alpha)} (G\times \mathfrak{g}^*)$ are identified with the pairs $(\xi, \nu)\in {\mathfrak g}\times {\mathfrak g}^*$.

Let $H: T^*G\equiv G\times {\mathfrak g}^*\longrightarrow {\mathbb R}$ be the Hamiltonian function, we compute
\begin{equation}
dH_{(g, \alpha)}(\xi, \nu)=\left\langle  {\mathcal L}_g^*\left(\frac{\delta H}{\delta g}(g, \alpha)\right), \xi\right\rangle+ \left\langle \nu, \frac{\delta H}{\delta \alpha}(g, \alpha)\right\rangle\; , \label{hami-0}
\end{equation}
since $\frac{\delta H}{\delta \alpha}(g, \alpha)\in {\mathfrak g}^{**}={\mathfrak g}$.

Thus, the classical Hamilton's equations whose solutions are integral curves of the Hamiltonian vector field $X_H$ on $T^*G$ can be rewritten as integral curves of a vector field on $G\times \mathfrak{g}^*$. After left-trivialization the equations become $X_H(g, \alpha)=(\xi, \nu)$, where $\xi\in {\mathfrak g}$ and $\nu\in {\mathfrak g}^*$ are elements to be determined using the Hamilton's equations
\[
i_{X_H}\omega_G=dH\; .
\]
 Therefore, from expressions (\ref{omega}) and (\ref{hami-0}), we deduce that
\begin{eqnarray*}
\xi&=&\frac{\delta H}{\delta \alpha}(g, \alpha)\; ,\\
\nu&=&-{\mathcal L}_g^*\left(\frac{\delta H}{\delta g}(g, \alpha)\right)+ad_{\xi}^*\alpha\; .
\end{eqnarray*}
As $\dot{g}=g\xi$, we obtain the Euler-Arnold equations on $G\times \mathfrak{g}^*$:
\begin{eqnarray*}
\dot{g}&=&T_e{\mathcal L}_g\left(\frac{\delta H}{\delta \alpha}(g, \alpha)\right)\equiv g \frac{\delta H}{\delta \alpha}(g, \alpha)\; ,\\
\dot{\alpha}&=&-{\mathcal L}_g^*\left(\frac{\delta H}{\delta g}(g, \alpha)\right)+ad_{\frac{\delta H}{\delta \alpha}(g, \alpha)}^*\alpha\; . 
 \end{eqnarray*}

 A retraction map 
  $\tau: {\mathfrak g}\to G$ is an analytic local
diffeomorphism around the identity such that $\tau(\xi)\tau(-\xi)={e}$ for any $\xi\in\mathfrak g$. Thereby, $\tau$ provides a local chart on the Lie
group. For such a retraction map~\citep{BouMa}, the left trivialized tangent map $\mbox{d}^L\tau_{\xi}:\mathfrak{g}\rightarrow\mathfrak{g}$ and the inverse map $\mbox{d}^L\tau_{\xi}^{-1}:\mathfrak{g}\rightarrow\mathfrak{g}$ are defined for all   $\eta\in\mathfrak{g}$ as follows
  \begin{eqnarray}
    T_{\xi}\tau (\eta)&=&T_{ e} {\mathcal L}_{\tau(\xi)}\left(\mbox{d}^L\tau_{\xi}(\eta)\right)\equiv \tau(\xi)\mbox{d}^L\tau_{\xi}(\eta)\,, \nonumber \\
    T_{\tau(\xi)}\tau^{-1}((T_{ e} {\mathcal L}_{\tau(\xi)})\eta)&=&\mbox{d}^L\tau^{-1}_{\xi}(\eta). \label{eq:lefttrivialin}
  \end{eqnarray}
In other words,
$$ \mbox{d}^L\tau_{\xi}(\eta)\colon = \tau(-\xi) T_{\xi}\tau (\eta) =T_{\tau(\xi)} {\mathcal L}_{\tau(-\xi)}T_{\xi}\tau (\eta) \, .$$
Analogously, it can be defined the right trivialized tangent map $\mbox{d}^R\tau_{\xi}:\mathfrak{g}\rightarrow \mathfrak{g}$ given by 
  \begin{eqnarray}\mbox{d}^R\tau_{\xi}(\eta)&=&T_{\tau(\xi)}{\mathcal R}_{\tau(-\xi)}T_\xi \tau(\eta)=T_\xi \tau(\eta)\,  \tau(-\xi) \, , \nonumber \\ \mbox{d}^R\tau^{-1}_{\xi}(\eta)&=&T_{\tau(\xi)} \tau^{-1} (T_e{\mathcal R}_{\tau(\xi)}(\eta))\,  \label{eq:righttrivialinv}.\end{eqnarray}

\bibliography{references}

\end{document}